\theoremstyle{plain}
\newtheorem{theorem}{Theorem}[section]
\newtheorem{question}[theorem]{Question}
\newtheorem{proposition}[theorem]{Proposition}
\newtheorem{lemma}[theorem]{Lemma}
\newtheorem{corollary}[theorem]{Corollary}
\theoremstyle{definition}
\newtheorem{definition}[theorem]{Definition}
\newtheorem{example}[theorem]{Example}
\newtheorem{construction}[theorem]{Construction}
\newtheorem{remark}[theorem]{Remark}
\renewcommand{\epsilon}{\varepsilon}
\renewcommand{\le}{\leqslant}
\newcommand{\inv}{^{-1}}
\newcommand{\Q}{\mathcal{Q}}
\newcommand{\R}{\mathbb{R}}
\newcommand{\C}{\mathbb{C}}
\newcommand{\N}{\mathbb{N}}
\renewcommand{\H}{\mathcal{H}}
\title{Oblivious Points on Translation Surfaces}
\author{Ian Adelstein\textsuperscript{*}, Krish Desai, Anthony Ji, Grace Zdeblick}
\thanks{\noindent\textsuperscript{*}corresponding author}
\address{Department of Mathematics, Yale University, 10 Hillhouse Ave, New Haven, CT 06511}
\begin{document}

\begin{abstract}
An oblivious point on a translation surface is a point with no closed geodesic passing through it. Nguyen, Pan, and Su \cite{NPS} showed that there are at most finitely many oblivious points on any given translation surface and constructed a family of surfaces with exactly one oblivious point. We construct new families of translation surfaces with arbitrarily many oblivious points and prove that there is a translation surface in every genus $\ge3$ with an oblivious point.

\keywords{}
\end{abstract}

\maketitle

\section{Introduction}
Translation surfaces are closely related to polygonal billiards and illumination problems. They are surfaces naturally equipped with a flat metric away from a finite set of singularities called cone points.
Due to the flat metric, geodesics on these surfaces are straight lines. Closed geodesics are geodesics which close up on themselves smoothly. Nguyen, Pan, and Su \cite{NPS} investigate the relationship between regular (non-cone) points and closed geodesics. They prove on any translation surface, that the set of regular points not contained in a closed geodesic is finite. Let us call these points \textit{oblivious points}.

\begin{definition}
An \textit{oblivious point} is a regular point on a translation surface through which there are no simple closed geodesics.
\end{definition}

\begin{theorem}[\cite{NPS}, Theorem 1]
The set of oblivious points on a translation surface is finite.
\end{theorem}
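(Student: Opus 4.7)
The plan is to confine the oblivious points to the intersection of two finite one-dimensional sets lying in transverse directions, which then forces finiteness. The starting point is the observation that in a \emph{completely periodic} direction $\theta$ on a translation surface $X$ — meaning the entire surface decomposes as a finite disjoint union of open cylinders $C_1,\dots,C_k$ with core curves parallel to $\theta$ — every regular point in the interior of some $C_i$ lies on the closed leaf of the cylinder foliation through it, which is a simple closed geodesic. Hence, in such a direction, every oblivious point must lie on the boundary graph
\[
G_\theta \;=\; \bigcup_i \partial C_i,
\]
which is a finite union of saddle connections all parallel to $\theta$.

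I would then apply the same observation to a second completely periodic direction $\theta'$ non-parallel to $\theta$, so that every oblivious point lies in $G_\theta \cap G_{\theta'}$. Because each of $G_\theta$ and $G_{\theta'}$ is a finite union of straight geodesic segments in a single direction and the two directions differ, any segment of $G_\theta$ meets any segment of $G_{\theta'}$ in at most one point. Together with the finiteness of the number of segments, this bounds $|G_\theta \cap G_{\theta'}|$ by a constant depending only on the combinatorics of the two cylinder decompositions, and the oblivious set is contained in this finite intersection.

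The main obstacle is the step I used without proof: the existence of at least two non-parallel completely periodic directions on an arbitrary translation surface. Masur's theorem furnishes a dense set of periodic directions, but upgrading ``periodic'' to ``completely periodic'' requires extra input, typically via the Teichm\"uller geodesic flow or Smillie--Weiss-type arguments that propagate cylinder coverage across the whole surface. If complete periodicity is not readily available, the fallback is to fix a single periodic direction, handle each residual minimal component by exhibiting cylinders through its interior in many additional directions, and repeat the transverse-intersection count with three or more direction-parallel graphs; the core finiteness mechanism is unchanged, but the bookkeeping is where the real work lies.
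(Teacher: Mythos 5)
The paper cites this theorem from \cite{NPS} and does not reprove it, so there is no in-paper proof to compare against; what follows assesses your argument on its own terms. The finiteness mechanism you set up would work if its hypothesis held: if both $\theta$ and $\theta'$ were completely periodic, every oblivious point would be confined to $G_\theta \cap G_{\theta'}$, and this set is finite by compactness. (One small correction: a single saddle connection in direction $\theta$ can cross one in direction $\theta'$ several times on the surface, so ``at most one point per pair of segments'' overstates things; discreteness of transversal intersections together with compactness still gives finiteness.)

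The gap you flag is, however, the entire content of the theorem rather than a matter of bookkeeping. Smillie--Weiss gives a direction containing a cylinder, not a direction in which the whole surface decomposes into cylinders, and a general translation surface need not have even one completely periodic direction: in a typical saddle-connection direction on a non-lattice surface, a cylinder coexists with minimal components. Your fallback --- exhibiting cylinders through the interior of each residual minimal component in additional directions --- is precisely the assertion that all but finitely many points of that minimal component lie on a closed geodesic, which is the statement being proved, so invoking it at this stage is circular. The argument in \cite{NPS} does not pass through complete periodicity; it rests on a more structural analysis of where non-blocked points can live, and that input is where the actual work lies.
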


Additionally, they give a construction for a family of surfaces with exactly one oblivious point. A natural question to ask is whether it is possible to find translation surfaces with any number of oblivious points. One may also study the genus of surfaces with oblivious points.
In this paper, we demonstrate the following results:
\begin{theorem}\label{n obl}
For every $n \in \mathbb{N}$ there is a translation surface with exactly $n$ oblivious points.
\end{theorem}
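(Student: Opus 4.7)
The plan is to construct, for each $n \ge 1$, a translation surface $X_n$ with exactly $n$ oblivious points by slit-gluing $n$ copies of the Nguyen--Pan--Su surface. Let $(S,p)$ denote their surface together with its unique oblivious point. Take $n$ disjoint copies $(S_1,p_1),\ldots,(S_n,p_n)$; in each $S_i$ select a short straight segment $\tau_i$ disjoint from all cone points and from $p_i$, of a common slope chosen generically, and form $X_n$ by identifying the slits $\tau_i$ cyclically via the standard slit-gluing operation. The resulting $X_n$ is a translation surface containing the $n$ marked regular points $p_1,\ldots,p_n$.

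The heart of the proof is showing that each $p_i$ remains oblivious in $X_n$. A simple closed geodesic $\gamma$ through $p_i$ is a straight-line trajectory that either (a) avoids every slit or (b) crosses some $\tau_j$. In case (a) one has $\gamma\subset S_i$, contradicting the obliviousness of $p_i$ in $S_i$ by \cite{NPS}. In case (b) the trajectory leaves $p_i$, crosses a slit into an adjacent copy, and to close back on itself at $p_i$ in the same direction must cross slits an even number of times. Using the cyclic gluing pattern together with the genericity of the slit slope, I would argue that any such trajectory must either fail to close in the same direction or self-intersect in some $S_j$ before closing. Establishing this obstruction explicitly, based on the geometry of the NPS construction near $p$ and a careful choice of slit position and slope, is the main obstacle of the proof.

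For every other regular point $q\in X_n\setminus\{p_1,\ldots,p_n\}$, I would show $q$ is non-oblivious. Such a $q$ lies in some $S_i$ and, by \cite{NPS}, admits a simple closed geodesic $\gamma_q\subset S_i$. Since simple closed geodesics on translation surfaces sit in open parallel cylinders of positive width, shrinking $\tau_i$ and choosing its position and slope generically ensures that for each $q\in S_i$ some such $\gamma_q$ is disjoint from $\tau_i$; for points whose only cylinder is cut by $\tau_i$, one replaces $\gamma_q$ by a parallel closed geodesic in a different periodic direction through $q$. The chosen $\gamma_q$ then persists as a simple closed geodesic of $X_n$ through $q$. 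The case $n=1$ recovers \cite{NPS}, and $n=0$ is realized by any torus, giving the theorem.
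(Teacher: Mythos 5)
Your construction is essentially the one the paper uses — slitting $n$ copies of the Nguyen--Pan--Su surface together cyclically — but you miss the one observation that makes the argument close without any case analysis: if the slit segment is taken to be the \emph{same} segment $E$ on every copy (not merely of the same slope), then the glued surface is a degree-$n$ \emph{branched translation cover} of $S$, branched over the two endpoints of $E$. Once that is recognized, your ``main obstacle,'' case (b), evaporates. The paper proves (Lemma~\ref{obl}) that the preimage of an oblivious point under a translation cover is oblivious, and the proof is exactly what your case (b) is struggling toward: a closed non-singular geodesic $\gamma$ through $p_i$ in $X_n$ — whether or not it crosses slits — pushes forward under the covering map to a closed non-singular geodesic through $p$ in $S$, because translation covers are local isometries and $\gamma$ avoids the branch set. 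That contradicts the obliviousness of $p$ in $S$. Your genericity/self-intersection argument is not only incomplete (as you acknowledge), it is unnecessary; and it is not clear it could be made to work, since the gluing identifies slit sides across different sheets and nothing a priori prevents a trajectory from re-entering $S_i$ and closing up smoothly.

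Your argument for non-obliviousness of the remaining points has the same flavor of difficulty, and again the covering structure resolves it. You try to shrink $\tau_i$ and choose, for each $q$, a cylinder disjoint from the slit — but a fixed positive-length slit meets cylinders through a full-measure set of points, and the per-point choice of a ``different periodic direction'' is not justified. The paper instead invokes the companion lemma (Lemma~\ref{Cover Converse}): the preimage of a non-oblivious point under a translation cover lies on a closed non-singular geodesic, so every $q \neq p_i$ is non-oblivious automatically. In short, the construction you propose is sound, but the proof has a genuine gap: you need to place the slits so that the result is a translation cover, and then prove (or invoke) the two covering lemmas; the direct geometric argument you sketch does not substitute for them.
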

\begin{theorem}\label{g geq 3}
There exist translation surfaces in every genus $g \geq3$ admitting an oblivious point.
\end{theorem}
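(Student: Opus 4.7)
I would proceed by induction on genus, with base case $g=3$ and an inductive step that attaches a handle via slit surgery. For the base case, I would either verify that the NPS construction itself produces a genus-$3$ example, or perform a mild modification of their polygonal construction (adjusting side lengths or introducing a symmetric subdivision) so that Gauss-Bonnet accounting yields genus exactly $3$ while preserving the structural property that a distinguished regular point $p$ lies on no simple closed geodesic. The inductive claim to establish is: if $X_g$ is a translation surface of genus $g \geq 3$ with an oblivious point $p$, then there is a translation surface $X_{g+1}$ of genus $g+1$ with an oblivious point.

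For the inductive step, I would use slit surgery to attach a handle. Choose a short horizontal segment $\sigma \subset X_g$ lying in a flat neighborhood disjoint from $p$, and cut $X_g$ along $\sigma$ to expose two sides $\sigma^{\pm}$. Take a small flat torus $\T$ with a horizontal slit $\sigma'$ of equal length, cut $\T$ along $\sigma'$, and identify $\sigma^{+}$ with $\sigma'^{-}$ and $\sigma^{-}$ with $\sigma'^{+}$. The resulting translation surface $X_{g+1}$ has genus $g+1$, retains $p$ as a distinguished regular point, and acquires two new cone points of angle $4\pi$ at the endpoints of $\sigma$.

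The crux of the argument is showing that $p$ remains oblivious on $X_{g+1}$. Let $\gamma$ be a simple closed geodesic on $X_{g+1}$ through $p$. If $\gamma \cap \sigma = \emptyset$, then $\gamma$ is supported in $X_g \setminus \sigma \subset X_{g+1}$, so it descends to a simple closed geodesic on $X_g$ through $p$, contradicting the obliviousness of $p$ on $X_g$. The remaining case is that $\gamma$ crosses $\sigma$ and enters the attached handle; this is the case I must rule out. By choosing $\T$ with side lengths irrationally related to the saddle connection lengths and saddle-connection directions visible from $p$ on $X_g$, I would argue that no straight-line loop based at $p$ that takes one or more excursions into $\T$ can return to $p$ in the same direction and close up smoothly.

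The main obstacle is precisely this last step: a candidate $\gamma$ may cross $\sigma$ many times, weaving arbitrarily between $X_g$ and $\T$, and one must rule out \emph{all} such crossing patterns simultaneously. My approach would be to encode each candidate $\gamma$ by its combinatorial sequence of $\sigma$-crossings and apply Diophantine genericity on the parameters of $\T$ and the placement of $\sigma$ to derive an incommensurability obstruction to closure. An alternative, possibly cleaner, route is to exploit the specific structure of the NPS base example, arguing that every direction from $p$ is already blocked by an original cone point strictly closer to $p$ than $\sigma$, so geodesics from $p$ never reach the handle at all; then the inductive step is almost immediate, and the real work shifts to choosing $\sigma$ so that this ``blocking cone point before $\sigma$'' property is preserved at each stage.
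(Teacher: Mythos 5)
Your proposal has two genuine gaps, one in the base case and one in the inductive step.

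The base case is harder than you anticipate. The paper explicitly shows (Remark~\ref{4.5} and Table~\ref{tabs}) that the NPS canonical-double-cover construction (Theorem~\ref{NPS3.1}) \emph{cannot} produce a genus-$3$ surface with an oblivious point: every semi-translation stratum whose double cover lands in genus $3$ has either zero or at least two cone points of angle $\pi$, so the hypothesis of Theorem~\ref{NPS3.1} is never met. A ``mild modification'' of side lengths or a symmetric subdivision does not change the stratum and hence cannot rescue this. What is required is a structurally different construction, which the paper supplies via \emph{blocking sets}: take two copies of the square torus, mark a finite blocking set for the origin such as $P = \{(1/2,0),(0,1/2),(1/2,1/2),(1/4,1/2)\}$, slit along segments joining blocking points, and glue the copies. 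The result has four cone points of angle $4\pi$, hence genus $3$ by Gauss--Bonnet, and the preimages of the origin are oblivious by Lemma~\ref{ob2} (any closed geodesic through a preimage would project to a closed geodesic through the origin, which must meet $P$, whose preimages are cone points).

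The inductive step is where your proposal would most likely fail. Your Diophantine-genericity route requires ruling out closure for \emph{every} combinatorial crossing pattern of $\sigma$, which is notoriously hard to control and which you correctly flag as the main obstacle; I see no way to push it through cleanly. Your alternative --- that ``every direction from $p$ is already blocked by an original cone point strictly closer to $p$ than $\sigma$'' --- rests on a false premise: obliviousness means no \emph{closed} geodesic passes through $p$, but only countably many directions from $p$ hit a cone point (the saddle-connection directions). Almost every direction from $p$ is unblocked and the ray is dense, so geodesics from $p$ certainly do reach any slit $\sigma$ placed at positive distance from $p$. The paper's blocking-set lemma sidesteps the whole problem: instead of trying to control rays direction by direction, it works at the level of closed geodesics and shows that any closed geodesic through the preimage of $x$ must hit a branch point; this is a combinatorial condition that persists trivially when more slits (hence more cone points) are added. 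In fact the paper's cyclic-slit construction is already inductive in spirit --- it produces genus $2n-1$ from $n$ sheets and genus $2n$ via one extra slit --- but the obliviousness is guaranteed uniformly by Lemma~\ref{ob2} rather than re-established step by step.
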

It is known that translation surfaces with genus $g<3$ do not admit oblivious points \cite{NPS, zorich}, so in this sense Theorem~\ref{g geq 3} is optimal.

The paper proceeds as follows: In Section~\ref{background} a standard background on translation surfaces is presented. In Section~\ref{sec:trans} the behaviour of oblivious points under translation covering maps is studied. In Section~\ref{sec:slits} we use the covering technology developed, together with a slit construction and an existence result from \cite{NPS} to prove Theorem~\ref{n obl}. We also show how these techniques can be used to produce a translation surface admitting an oblivious point in every genus $g\geq 4$. In Section~\ref{sec:block} we focus our attention on square-tiled surfaces, and introduce a new construction that allows us to produce translation surfaces admitting an oblivious point in every genus $g \geq 3$, completing the proof of Theorem~\ref{g geq 3}. This construction also yields translation surfaces admitting $n$ oblivious points for every $n \in \mathbb{N}$, thus providing an alternate proof of Theorem~\ref{n obl}. Finally, in Section~\ref{sec:tilings} we prove results about which square-tiled translation surfaces admit oblivious points. 

\section{Background}\label{background}

This section includes much of the background necessary to state and prove our main results. 
 
\subsection{Translation Surfaces}

A \textit{translation surface} is a collection of polygons in the plane with parallel sides identified by complex translation $z\mapsto z + c\in \C,$ up to cut and paste. When one identifies sides by translation, certain singular points of concentrated curvature, called \textit{cone points}, do not have a total angle of $2\pi$ around them. The angles around these cone points, called \textit{cone angles}, are integer multiples of $2\pi$. A \textit{square--tiled surface (STS)} is a translation surface of which every polygon is a square.

 The genus of a surface can be calculated using only the cone angles through the following theorem.

\begin{theorem}[Combinatorial Gauss--Bonnet]
Consider a translation surface with cone points $p_i$ and corresponding cone angles $\alpha_i$. Define the cone angle excess at each point as $k_i=\frac{\alpha_i-2\pi}{2\pi}$. Then
$$\sum_i k_i = 2g-2$$
where $g$ is the genus of the surface.
\end{theorem}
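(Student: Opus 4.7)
The plan is to derive the identity combinatorially from Euler's formula applied to the polygonal presentation. Write the translation surface as a finite union of plane polygons $P_1, \ldots, P_F$ with sides paired and identified by translations. Let $V$ be the number of vertex orbits under these identifications, and let $E$ be the number of edge pairs (so each edge class consists of two identified sides). Since the quotient is a closed orientable surface of genus $g$, Euler's formula gives $V - E + F = 2 - 2g$.

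Next I would compute the total of all polygon interior angles in two different ways. By the classical polygon formula, a polygon with $s_j$ sides contributes $(s_j - 2)\pi$; summing and using $\sum_j s_j = 2E$ yields a grand total of $2\pi(E - F)$. On the other hand, grouping these angles according to the vertex orbit in which they sit, the same grand total equals $\sum_i \alpha_i$, where $\alpha_i$ is the total angle around the $i$-th vertex orbit on the surface (which is $2\pi$ for orbits that happen to be regular). Equating the two expressions and rearranging Euler's formula yields
\[
\sum_i \alpha_i \;=\; 2\pi(E - F) \;=\; 2\pi(V + 2g - 2).
\]

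Finally I would substitute $\alpha_i = 2\pi(k_i + 1)$, which is valid at every vertex orbit since $k_i = 0$ precisely when $\alpha_i = 2\pi$. The left side becomes $2\pi V + 2\pi \sum_i k_i$; cancelling the $2\pi V$ gives $\sum_i k_i = 2g - 2$, as claimed. Restricting the sum to genuine cone points does not change its value since regular orbits contribute zero.

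I do not expect a serious obstacle here; the content is standard combinatorial Gauss--Bonnet. The one point that requires care is that $V$ must count vertex orbits on the quotient surface rather than the raw number of polygon corners, so one has to verify that the side identifications are compatible with a bona fide CW structure. A more conceptual alternative would be to interpret the angle deficit $2\pi - \alpha_i$ as an atom of Gaussian curvature at $p_i$ and invoke the smooth Gauss--Bonnet theorem $\int_M K\,dA = 2\pi\chi(M)$, but the combinatorial route has the advantage of avoiding any appeal to smoothing or distributional curvature.
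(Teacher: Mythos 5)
Your proof is correct and complete. Note, however, that the paper does not actually prove this theorem --- Combinatorial Gauss--Bonnet is stated in the background section as a standard fact with no accompanying argument, so there is no paper proof to compare against. Your combinatorial route via Euler's formula is the canonical one: double-counting the total interior angle of the polygonal presentation (once by the $(s_j-2)\pi$ formula, once by grouping angles around vertex orbits) and substituting $\alpha_i = 2\pi(k_i+1)$ cleanly produces $\sum_i k_i = 2g-2$. The one technical caveat you flag --- that $V$ must count vertex orbits in a bona fide CW structure on the quotient --- is legitimate but unproblematic here, since the side-pairing of a translation surface by translations always yields a valid polygonal CW decomposition of a closed oriented surface. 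The remark that the angle deficit can be viewed as an atom of curvature and the result derived from smooth Gauss--Bonnet is also accurate and is essentially why the statement carries that name; your preference for the combinatorial version avoids having to make the distributional curvature rigorous, which is the right call for a background lemma.
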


Using the same definition for the $k_i$, surfaces of the same genus may be further partitioned into \textit{strata}.
If $X$ is a translation surface with cone angle excesses $\{k_1,k_2,\cdots,k_n\}$, then the surface is in the stratum denoted $\mathcal{H}(k_1,k_2,\cdots,k_n)$.

\subsection{$GL_2(\R)$ and Veech Group Action}
The natural action of the group $GL_2(\R)$ on a stratum of translation surface is obtained by $GL_2(\R)$ acting on the polygonal representation of the surface in $\R^2$.

One might seek to study the induced action of this group on points and geodesics on a translation surface. However, such an action is not quite induced on points and geodesics on the surface, because the cut and paste action described above can identify points on a translation surface with a non-trivial translation automorphism group. Rather, this action is on equivalence classes of points and geodesics modulo this group, rather than directly on points. To resolve this ambiguity, one typically passes to a finite cover of the stratum of translation surfaces, and by marking sufficiently many points one can break the equivalence and distinguish the surfaces. Here the action is indeed on points and geodesics. Since the obliviousness of a point commutes with translation automorphisms, in practice, one can pass freely between the the stratum and its finite cover, though precision about this process is indeed required in formal proof.

\begin{proposition}
Consider translation surface $X$ in stratum $ \H$ with an oblivious point at $x$, and translation automorphism group $G$. Let $\phi: \overline\H \to \H$ be a covering map that breaks the ambiguity created by $G$ by adding marked points. For every $g\in GL_2(\R)$ if $X_g$ is an element of then set $\phi(g\cdot \phi\inv (X)),$ and $x_g$ is an element of the set $\phi(g\cdot \phi\inv(x))$ the $x_g$ is oblivious in $X_g.$
\end{proposition}
\begin{proof}
Let $X$ be a translation surface and $x\in X$ an oblivious point. By way of contradiction, assume there exists $g\in GL_2(\R)$ such that $y \in \phi(g\cdot \phi\inv(x))$ is contained in a closed geodesic $\gamma$. Since $GL_2(\R)$ is a group, the element $g$ is invertible. This implies that there is a closed geodesic in $\phi(g^{-1}\cdot\phi\inv( \gamma))$ through $x = \phi(g^{-1}\cdot \phi\inv(y))$. But $x$ is an oblivious point by assumption, which leads to a contradiction.
\end{proof}

One may summarize the above proposition through the slogan, ``the $GL_2(\R)$ image of an oblivious point is oblivious."
\begin{definition}
Given a translation surface $X$, its \textit{Veech group}, $SL(X)$ is the stabiliser subgroup of $X$  under $SL_2(\R).$ It comprises those elements of $SL_2(\R)$ whose action sends $X$ back to itself up to cut--and--paste. 
\end{definition}

\begin{corollary}
As $SL(X)$ is a subgroup of $GL_2(\R)$, the orbit of an oblivious point under the Veech group action consists of oblivious points. Let $\operatorname{Trans}(X)$ be the group of translation automorphisms of $X.$ Given (a $\operatorname{Trans}(X)$-orbit of) an oblivious point on a translation surface $X$ with non-trivial Veech group, additional oblivious points may be obtained by identifying the points in the orbit  with points on $X$. 

\end{corollary}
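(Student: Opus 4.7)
The first assertion is a direct application of the preceding proposition. Let $x$ be an oblivious point on $X$ and let $g \in SL(X)$. Since $SL(X) \le GL_2(\R)$, the proposition guarantees that $g \cdot x$ is an oblivious point on the translation surface $g \cdot X$. The defining property of the Veech group is that $g \cdot X = X$ up to cut-and-paste, so $g \cdot x$ may be regarded as a point on $X$ itself, and as an oblivious point of $g \cdot X$ it is oblivious on $X$. Iterating over all $g \in SL(X)$ shows that the entire Veech orbit $SL(X)\cdot x$ consists of oblivious points on $X$.

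For the second assertion, the plan is to make the identification $g \cdot X \cong X$ explicit at the level of points. The cut-and-paste equivalence between $g\cdot X$ and $X$ induces a well-defined map from $g\cdot X$ to $X$; composing with $g$ gives a self-map of $X$ whose image of $x$ is the desired ``identified'' orbit point. If $SL(X)$ is non-trivial and some $g \in SL(X)$ sends $x$ to a distinct point of $X$ under this identification, then that image is a new oblivious point on $X$ by the first part. The number of additional oblivious points produced in this way is at least $|SL(X) \cdot x| - 1$, where the orbit is taken inside $X$ via the identifications.

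The only real obstacle is a notational/conceptual one, namely checking that the cut-and-paste equivalence $g\cdot X \cong X$ gives a well-defined identification of points, so that ``the orbit of $x$ inside $X$'' makes sense. This is routine because cut-and-paste operations are isometries of the flat metric away from the cone points, and in particular send regular points to regular points in a canonical way. Once this identification is set up, both statements follow without further work from the proposition.
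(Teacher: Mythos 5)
The paper states this corollary without a proof, treating it as an immediate consequence of the preceding proposition on $GL_2(\R)$-invariance of oblivious points. Your proposal fills in exactly that reasoning — apply the proposition to each $g \in SL(X) \le GL_2(\R)$, then use the cut-and-paste identification $g \cdot X \cong X$ to regard the orbit as living on $X$ itself — so it is correct and takes essentially the same approach the paper intends.
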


\subsection{Covers}

A \textit{cover} of a surface $X$ is a pair $(Y, \rho)$ where $Y$ is a closed surface and $\rho: Y\to X$ is a surjective local homeomorphism. The \textit{degree} of a cover $(Y, \rho)$ of a surface $X$ is the cardinality of the fiber of $\rho$ over any regular point $x\in X$, 
$$\deg \rho = |\{\rho^{-1}\{x\}\}|.$$
A \textit{branched cover} is a cover which ramifies over a finite set; that is, $(Y, \rho)$ is a branched cover of a closed surface $X$ if there exist finite sets of points, \textit{branch points} $B\subseteq Y$  and \textit{branching values} $V\subseteq X$, such that $\rho: Y\setminus B \to X\setminus V$ is a covering map. A \textit{translation cover} is a cover, branched or otherwise, which preserves translation structure. In a polygonal representation, a translation cover can be represented as multiple copies of a surface with edge identification preserved between copies. In this paper, we explore primarily branched translation covers.

A \textit{fully ramified} cover is a cover for which each of the branch points has cone angle that exceeds $2\pi$. One may note that translation covers are local isometries.

\subsection{Semi-Translation Surfaces}
Similar to translation surfaces, \textit{semi-translation surfaces}, also known as half-translation surfaces, are defined as collections of polygons in the plane with sides identified by translations and $180\deg$-rotations ($z\mapsto \pm z + c$) up to cut--and--paste. All cone points on a semi-translation surface have cone angles which are integer multiples of $\pi$.

The strata for semi-translation surfaces are defined similarly to those for translation surfaces. For a semi-translation surface $X$ with cone angles $\alpha_i$, define $l_i=\frac{\alpha_i-2\pi}{\pi}$ and say that $X$ is in $\mathcal{Q}(l_1, l_2, \ldots, l_n)$. 

Any given semi-translation surface can be related to a translation surfact through the following construction. The \textit{canonical double cover} of a semi-translation surface is defined according to Zorich\cite{zorich}: a semi-translation surface in the stratum $\mathcal{Q}(l_1, \ldots, l_n)$ is mapped to a surface in the stratum $\mathcal{H}(k_1,\ldots,k_m)$ where the $\{k_i\}$ are obtained from the $\{l_j\}$ via the following rules: 
\begin{enumerate}
\item To each even $l_j>0$, associate the pair $\{\frac{1}{2}l_j, \frac{1}{2}l_j\}$ in the $\{k_i\}$. 

\item To each odd $l_j>0$, associate $\{l_j+1\}$ in the $\{k_i\}$.

\item No $k_i$ is associated for $l_j=-1$.
\end{enumerate}
In words, cone angles that are even multiples of $\pi$ are duplicated in number and cone angles that are odd multiples of $\pi$ are doubled in angle. As such, the canonical double cover of a semi-translation surface is a translation surface.

As the first existence result for oblivious points Nguyen, Pan, and Su \cite{NPS} showed that the canonical double cover can be used to construct oblivious points via the following theorem. 

\begin{theorem}[\cite{NPS}, Proposition 3.1]\label{NPS3.1}
Let $X$ be a semi-translation surface with exactly one cone point $y$ of cone angle $\pi$ (other cone points with different cone angles are allowed). The canonical double cover of $X$ has an oblivious point at the pre-image of $y$.
\end{theorem}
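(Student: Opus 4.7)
The plan is to argue by contradiction: suppose $\tilde\gamma$ is a simple closed geodesic on $\tilde X$ through $\tilde p$ and use the deck involution $\sigma$ of the canonical double cover $\pi\colon\tilde X\to X$ to derive an impossibility. By rule (3) of the construction, $p$ has a single preimage $\tilde p$, which is a regular point of $\tilde X$; by rules (1) and (2), every other ramification point of $\pi$ is a cone point of $\tilde X$ (coming from a cone of odd angle at least $3\pi$ on $X$). Hence $\tilde p$ is the \emph{only} regular ramification point of $\pi$.

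The first step is to pin down the local action of $\sigma$ at $\tilde p$. In polar coordinates the $\pi$-cone at $p$ is parametrised by $\{re^{i\phi}:0\le\phi<\pi\}$ with its two boundary rays identified, the regular neighborhood of $\tilde p$ by $\{re^{i\theta}:0\le\theta<2\pi\}$, and the covering map is $\theta\mapsto\theta\bmod\pi$. Therefore $\sigma(re^{i\theta})=re^{i(\theta+\pi)}$ is a rotation by $\pi$ about $\tilde p$, and in particular reverses every tangent direction there.

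Since $\sigma$ is an isometry, $\sigma(\tilde\gamma)$ is a simple closed geodesic through $\sigma(\tilde p)=\tilde p$ whose tangent line at $\tilde p$ coincides with that of $\tilde\gamma$ (with reversed orientation). Uniqueness of geodesic extension then forces $\sigma(\tilde\gamma)=\tilde\gamma$ as subsets, so $\sigma$ restricts to a non-trivial involution of the circle $\tilde\gamma$ (non-trivial because the ramification locus of $\pi$ is finite and $\tilde\gamma$ is one-dimensional). The fixed points of this restriction are precisely the ramification points of $\pi$ lying on $\tilde\gamma$; because a smooth closed geodesic avoids every cone point of $\tilde X$, the only candidate is $\tilde p$ itself. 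But any non-trivial involution of $S^1$ has exactly $0$ or $2$ fixed points, and we have produced one with exactly $1$; contradiction.

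I expect the main obstacle to be the local analysis in the second paragraph: carefully identifying $\sigma$ with the $\pi$-rotation at $\tilde p$ from the rules of the canonical cover. The remainder of the argument reduces to a clean fixed-point count on $S^1$. To address the uniqueness of the oblivious point (that no regular $\tilde y\ne\tilde p$ is oblivious), one would lift a simple closed geodesic through $y=\pi(\tilde y)\ne p$ on $X$: a loop of trivial $\pm$-holonomy lifts to two disjoint closed geodesics, one through each preimage of $y$, while a loop of non-trivial $\pm$-holonomy lifts to a single doubly-covering closed geodesic through both preimages, giving a simple closed geodesic through $\tilde y$ in either case.
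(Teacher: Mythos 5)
This result is cited from Nguyen--Pan--Su and not re-proved in the paper, so there is no in-paper argument for direct comparison; I will assess your proposal on its own terms and against the structure of the claim.

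Your argument for the obliviousness of $\tilde p$ is correct and clean, and is the natural way to run this: the deck involution $\sigma$ of the canonical double cover acts as rotation by $\pi$ at $\tilde p$, so any simple closed geodesic $\tilde\gamma$ through $\tilde p$ satisfies $\sigma(\tilde\gamma)=\tilde\gamma$ by uniqueness of geodesic continuation at a regular point; $\sigma|_{\tilde\gamma}$ is then a non-identity involution of a circle (non-identity because $\mathrm{Fix}(\sigma)$ is finite), whose fixed points on $\tilde\gamma$ must be ramification points of the cover. Since the only regular ramification point is $\tilde p$ and a non-singular geodesic avoids the remaining ramification points (which are genuine cone points of $\tilde X$), $\sigma|_{\tilde\gamma}$ would have exactly one fixed point, contradicting the fact that an involution of $S^1$ other than the identity has either $0$ or $2$.

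The gap is in the uniqueness half. To show each regular $\tilde y\ne\tilde p$ is non-oblivious, you lift a simple closed geodesic through $y=\rho(\tilde y)$ on $X$, but you have not established that such a geodesic exists. This is not a minor omission: it amounts to asserting that the base semi-translation surface $X$ has \emph{no} oblivious regular points at all, which is neither a hypothesis of the proposition nor a consequence of NPS Theorem~1 (that theorem only gives \emph{finitely many} oblivious points, and is stated for translation surfaces rather than semi-translation surfaces). In fact the implication runs the wrong way for you: as you yourself note, a closed geodesic $\tilde\gamma$ through $\tilde y$ that misses $\tilde p$ projects to a closed geodesic through $y$, so if $y$ were oblivious on $X$ then $\tilde y$ would be oblivious on $\tilde X$ --- meaning the very uniqueness you want to prove already \emph{presupposes} that $X$ has no oblivious points. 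The uniqueness direction is therefore the substantive part of NPS's Proposition 3.1 and cannot be discharged by the lifting observation alone; it needs an independent argument (e.g.\ a cylinder-decomposition or direction-counting argument producing a closed geodesic through an arbitrary regular point of $X\setminus\{p\}$, or through an arbitrary regular point of $\tilde X\setminus\{\tilde p\}$ directly). The lifting bookkeeping itself (two lifts when the $\pm$-holonomy is trivial, one double-covering lift otherwise, simple in either case) is fine.
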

\begin{lemma}
[\cite{NPS}, Remark 3.2i]\label{NPS3.2i}
If $X$ has the additional property that every regular point is contained in a simple closed geodesic, then the preimage of $y$ is the unique oblivious point on the canonical double cover of $X$.
\end{lemma}

A visual representation of the canonical double cover of a semi-translation surface can be constructed with the following algorithm. Note that the goal is to eliminate the reflections, i.e.~to take sides that had been identified via translation and reflection, and reassign the identification in a way that eliminates the reflection. 

\begin{enumerate}
    \item Mark sides identified by translation with numbers, and sides identified by translation and reflection with opposite-facing arrows.
    \item Take two copies of the surface. Sides identified by translation will be associated as originally. Relabel these sides on one copy of the surface to avoid ambiguity. Leave the arrows alone.
    \item Rotate one of the copies by $180^\text{o}$.
    \item Leave the association of the numbered sides. Associate sides with arrows with their same orientation counterpart; this eliminates any reflection identification. 
\end{enumerate}

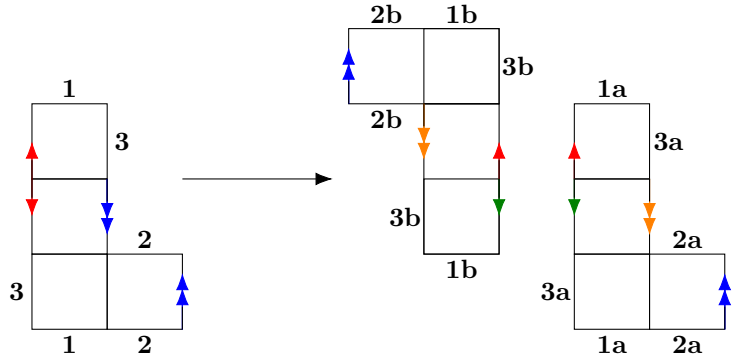
\begin{figure}[h!]
\begin{center}
    \begin{tikzpicture}
    \definecolor{ao}{rgb}{0.0, 0.5, 0.0}
    \draw (0, 0) rectangle (1,1);
    \draw (1, 0) rectangle (2, 1);
    \draw (0, 1) rectangle (1, 2);
    \draw (0, 2) rectangle (1, 3);
    \draw[draw=red,-{Latex[length=2.3mm]}] (0, 2) -- (0, 2.5);
    \draw (0,2) -- (0,2.25);
    \draw[draw=red,-{Latex[length=2.3mm]}] (0, 2) -- (0, 1.5);
    \draw (0,2) -- (0,1.75);
    \draw[draw=blue,-{Latex[length=2.3mm]}] (1, 2) -- (1, 1.25);
    \draw[draw=blue,-{Latex[length=2.3mm]}] (1, 2) -- (1, 1.45);
    \draw (1,2) -- (1,1.7);
    \draw[draw=blue,-{Latex[length=2.3mm]}] (2, 0) -- (2, 0.55);
    \draw[draw=blue,-{Latex[length=2.3mm]}] (2, 0) -- (2, .75);
    \draw (2,0) -- (2,0.3);
    \draw[-{Latex[length=2.3mm]}] (2, 2) -- (4, 2);
    \node[] at (0.5,-0.2) {\textbf{1}};
    \node[] at (0.5,3.2) {\textbf{1}};
    \node[] at (1.5,-0.2) {\textbf{2}};
    \node[] at (1.5,1.2) {\textbf{2}};
    \node[] at (-0.2,0.5) {\textbf{3}};
    \node[] at (1.2,2.5) {\textbf{3}};
    \end{tikzpicture}
   \begin{tikzpicture}
    \definecolor{ao}{rgb}{0.0, 0.5, 0.0}
    \draw (1, 0) rectangle (2, 1);
    \draw (0, 1) rectangle (-1, 2);
    \draw (2, 0) rectangle (3, 1);
    \draw (1, 1) rectangle (2, 2);
    \draw (0, 1) rectangle (-1, 3);
    \draw (1, 2) rectangle (2, 3);
    \draw (0, 3) rectangle (-1, 4);
    \draw (0, 3) rectangle (-2, 4);
    
    \draw[draw=red,-{Latex[length=2.3mm]}] (0, 2) -- (0, 2.5);
    \draw (0,2) -- (0,2.25);
    \draw[draw=ao,-{Latex[length=2.3mm]}] (0, 2) -- (0, 1.5);
    \draw (0,2) -- (0,1.75);
    \draw[draw=red,-{Latex[length=2.3mm]}] (1, 2) -- (1, 2.5);
    \draw (1,2) -- (1,2.25);
    \draw[draw=ao,-{Latex[length=2.3mm]}] (1, 2) -- (1, 1.5);
    \draw (1,2) -- (1,1.75);

    \draw[draw=orange,-{Latex[length=2.3mm]}] (2, 2) -- (2, 1.25);
    \draw[draw=orange,-{Latex[length=2.3mm]}] (2, 2) -- (2, 1.45);
    \draw (2,2) -- (2,1.7);
    \draw[draw=orange,-{Latex[length=2.3mm]}] (-1, 3) -- (-1, 2.25);
    \draw[draw=orange,-{Latex[length=2.3mm]}] (-1, 3) -- (-1, 2.45);
    \draw (-1,3) -- (-1,2.7);
    \draw[draw=blue,-{Latex[length=2.3mm]}] (3, 0) -- (3, 0.75);
    \draw[draw=blue,-{Latex[length=2.3mm]}] (3, 0) -- (3, .55);
    \draw (3,0) -- (3,0.3);
    \draw[draw=blue,-{Latex[length=2.3mm]}] (-2, 3) -- (-2, 3.75);
    \draw[draw=blue,-{Latex[length=2.3mm]}] (-2, 3) -- (-2, 3.55);
    \draw (-2,3) -- (-2,3.3);

    \node[] at (-0.5,0.8) {\textbf{1b}};
    \node[] at (-0.5,4.2) {\textbf{1b}};
    \node[] at (-1.5,2.8) {\textbf{2b}};
    \node[] at (-1.5,4.2) {\textbf{2b}};
    \node[] at (-1.25,1.5) {\textbf{3b}};
    \node[] at (0.25,3.5) {\textbf{3b}};
    \node[] at (1.5,-0.2) {\textbf{1a}};
    \node[] at (1.5,3.2) {\textbf{1a}};
    \node[] at (2.5,-0.2) {\textbf{2a}};
    \node[] at (2.5,1.2) {\textbf{2a}};
    \node[] at (0.75,0.5) {\textbf{3a}};
    \node[] at (2.25,2.5) {\textbf{3a}};
    \end{tikzpicture}
    \caption{An example of the canonical double cover algorithm}
\end{center}
\end{figure}

This construction duplicates cone angles which are even multiples of $\pi$ and doubles cone angles which are odd multiples of $\pi$, following the involution map described above. Thus the canonical double cover of a semi-translation surface is indeed a translations surface.

\section{Translation Covers and Geodesics}\label{sec:trans}

In this section we give results on the behavior of closed geodesics under translation covers. We will examine the behavior of preimages of points under translation covers and how it affects the obliviousness of a point. 
\begin{lemma}\label{Cover Converse}
The preimage of a point in a non-singular closed geodesic under a fully ramified translation cover is contained in a non-singular closed geodesic.
\end{lemma}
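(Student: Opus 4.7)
The plan is to lift $\gamma$ along $\rho$ and use a finiteness argument on the fiber to show that some iterate of the lift closes up into a smooth loop.

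First, set up the picture. Let $\rho:Y\to X$ be the translation cover, let $\gamma:[0,L]\to X$ be a non-singular closed geodesic with $\gamma(0)=\gamma(L)=x$, and let $y_0\in\rho^{-1}(x)$ be an arbitrary preimage. Because $\gamma$ avoids the cone points of $X$, and because the branching values of $\rho$ lie inside the cone-point set of $X$, the restriction of $\rho$ to a neighborhood of each point of $\gamma$ is an honest covering map of flat surfaces, and in particular a local isometry. So I can lift $\gamma$ uniquely to a geodesic segment $\tilde\gamma_0:[0,L]\to Y$ starting at $y_0$, and its endpoint $y_1:=\tilde\gamma_0(L)$ again lies in the finite fiber $\rho^{-1}(x)$.

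Next, iterate. Define a map $\sigma:\rho^{-1}(x)\to\rho^{-1}(x)$ by sending each preimage to the endpoint of the lift of $\gamma$ starting there. The analogous construction using $\gamma$ traversed in reverse produces an inverse for $\sigma$, so $\sigma$ is a bijection on a finite set, hence of finite order. Choose the smallest $n\ge 1$ with $\sigma^n(y_0)=y_0$ and concatenate the $n$ lifts $\tilde\gamma_0,\tilde\gamma_1,\dots,\tilde\gamma_{n-1}$, where $\tilde\gamma_i$ is the lift of $\gamma$ starting at $y_i=\sigma^i(y_0)$. This gives a piecewise geodesic loop $\tilde\gamma$ based at $y_0$ of length $nL$, whose image lies entirely in $Y$ minus the cone points (since cone points of $Y$ lie over cone points of $X$, which $\gamma$ avoids).

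Finally, check that the concatenation is actually smooth at each junction $y_i$. Since $\gamma$ is a smooth closed geodesic, $\gamma'(L)=\gamma'(0)$ as tangent vectors at $x$. Because $\rho$ is a local isometry near each $y_i$, the incoming tangent of $\tilde\gamma_{i-1}$ at $y_i$ and the outgoing tangent of $\tilde\gamma_i$ at $y_i$ are the unique lifts of the same vector $\gamma'(0)=\gamma'(L)$ under $d\rho_{y_i}$, so they agree. Therefore $\tilde\gamma$ is a non-singular closed geodesic in $Y$ passing through $y_0$, proving the lemma.

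I do not expect any serious obstacle here; the only point that requires a moment of care is the smoothness check at the junctions, which could fail in principle if $\gamma$ were only a closed curve rather than a smooth closed geodesic, and the bijectivity of $\sigma$ on the finite fiber, which is what guarantees that the iteration returns to $y_0$ rather than merely entering a cycle that misses it.
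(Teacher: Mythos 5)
Your proof is correct and arrives at the same object as the paper's proof, namely the connected component of $\rho^{-1}(\gamma)$ through the chosen preimage, which you construct explicitly by repeated lifting and a permutation argument on the finite fiber. The paper's proof is terser, simply naming this component $\tau$ and asserting it is a closed non-singular geodesic since $\rho$ is a local isometry; your finite-order argument for $\sigma$ and the smoothness check at the junctions supply exactly the details the paper leaves implicit.
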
 
\begin{proof}
    Let $(X, P)$ be a translation surface with marked points $P = \{p_1,p_2,\cdots,p_k\}$ and let $Y$ be a branched translation cover of $X$ with branched covering map $\rho$, branching points $B \subseteq Y$, and branching values $V\subseteq X$ with $P \subseteq V$. Let $\gamma$ be a closed non-singular geodesic in $X\setminus V$. Let $x$ be a point in $\gamma$ and $y \in \rho^{-1}(x)$. Denote $\tau$ to be the connected component of $\rho^{-1}(\gamma)$ containing $y$. Note that $\tau$ is a geodesic since $\rho$ is a local isometry and that $\rho(\tau) = \gamma$. Then $\tau$ is a closed non-singular geodesic in $Y\setminus B$. 
\end{proof}
The above lemma equivalently states that the preimage of a non-oblivious point under a translation cover is not oblivious. We also demonstrate the converse of this in the following lemma.

\begin{lemma}\label{obl}
The preimage of an oblivious point under a fully ramified translation cover is oblivious.
\end{lemma}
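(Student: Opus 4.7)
The plan is to argue by contradiction. Let $\rho : Y \to X$ be a translation cover and $x \in X$ an oblivious point. Since $x$ is regular by definition of oblivious, and branching values of a translation cover occur only at cone points of $X$, every preimage $y \in \rho^{-1}(x)$ is a regular point of $Y$ and $\rho$ is a local isometry near each such $y$. Suppose, for contradiction, that some $y \in \rho^{-1}(x)$ is not oblivious; then there is a simple closed geodesic $\tau \subset Y$ passing through $y$ and avoiding the cone points of $Y$. Pushing down, the loop $\gamma := \rho \circ \tau$ is a closed geodesic in $X$ (it closes up smoothly because $\rho$ is locally a translation, so the tangent at the endpoint matches that at the start), it contains $x$, and it avoids the cone points of $X$ (their preimages lie among the cone points of $Y$, which $\tau$ does not meet).

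The subtlety is that $\gamma$ need not be simple on $X$, whereas obliviousness of $x$ forbids only \emph{simple} closed geodesics through $x$. I would resolve this by extracting a simple closed sub-loop in the direction of $\gamma$. Let $v$ denote the constant direction of $\gamma$, well-defined in $\R^2$ via the translation atlas, and let $\tilde\gamma$ be the maximal straight-line geodesic through $x$ in direction $v$. Because $\gamma$ is a closed geodesic, $\tilde\gamma$ returns to $x$ in direction $v$; let $T>0$ be the minimal such return time and set $\gamma_0 := \tilde\gamma|_{[0,T]}$. Then $\gamma_0$ is a closed geodesic through $x$, and it is simple: any self-intersection $\gamma_0(t_1) = \gamma_0(t_2)$ with $0 \le t_1 < t_2 < T$ would come with identical tangent vectors $v$, so uniqueness of straight-line geodesics from a regular point with prescribed velocity would force $\gamma_0(t) = \gamma_0(t + (t_2 - t_1))$ for all $t$, making $t_2 - t_1 < T$ a period and contradicting the minimality of $T$.

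The existence of such a simple closed geodesic $\gamma_0$ through $x$ contradicts the obliviousness of $x$, completing the proof. The main obstacle is exactly the extraction step: the direct image $\rho \circ \tau$ only provides a (possibly non-simple) closed geodesic through $x$, and one must exploit the translation structure — namely the uniqueness of straight-line geodesics through a regular point with a given direction — to promote this to a simple closed geodesic. The rest of the argument is routine use of the fact that translation covers are local isometries away from cone points.
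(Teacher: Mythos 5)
Your argument is correct and, at its core, is the same contrapositive push-forward the paper uses: a simple closed non-singular geodesic $\tau$ through a preimage of $x$ descends under the local isometry $\rho$ to a closed non-singular geodesic through $x$. What you add, and what the paper elides, is the check that this image can be promoted to a \emph{simple} closed geodesic through $x$, which is exactly what the definition of oblivious point requires. The paper's proof concludes only that $\rho(\tau)$ is a closed non-singular geodesic and never addresses simplicity, whereas you explicitly extract the first-return loop $\gamma_0$ of the straight-line flow through $x$ in the constant direction $v$ and verify its simplicity by uniqueness of geodesics with prescribed velocity. This extraction step is the right way to close the gap; the underlying observation (that constant tangent direction on a translation surface forces any self-intersection to produce a shorter period, so primitive closed non-singular geodesics are automatically simple) is also what makes the paper's terser version morally correct. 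Your proof is therefore a more careful rendering of the same approach rather than a genuinely different one.
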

\begin{proof}
It suffices to prove the equivalent statement that the image of a non-singular closed geodesic under a translation cover is a non-singular closed geodesic.
Let $(X, P)$ be a translation surface with marked points $P = \{p_1,p_2,\cdots,p_k\}$. Let $Y$ be a branched translation cover of $X$ with $\rho$ the branched covering map with branching points $B \subseteq Y$ and branching values $V\subseteq X$ with $P \subseteq V$.
We have $B = \rho^{-1}(V)$ by definition.
Suppose $\gamma \subseteq Y \setminus B$ is a simple, closed, non-singular geodesic (since all points in $B$ have cone angle $4\pi$ or greater, we need not consider closed geodesics containing points in $B$). Then since $\rho$ is continuous, $\rho(\gamma)$ is connected as $\gamma$ is connected. Hence $\rho(\gamma)$ is closed. Moreover $\rho$ is a local isometry, thus $\rho(\gamma)$ is a geodesic. Now suppose $\rho( \gamma)$ is singular. Then by assumption $\rho(\gamma)$ passes through a point in $V$. Since $\gamma \cap B = \emptyset$ this is impossible. Thus $\rho (\gamma)$ is non-singular.
\end{proof}

We use these lemmas to better understand the relationship between oblivious points and regularly tiled polygonal surfaces.

\begin{lemma}\label{polygon vertex}
Let $X$ be a translation surface tiled by a regular polygon. Then any oblivious point of $X$ must be a vertex point.
\end{lemma}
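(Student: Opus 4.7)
The plan is to show that every regular, non-vertex point of $X$ lies on a simple closed non-singular geodesic; then any oblivious point, being a regular point avoided by all such geodesics, must be a vertex of the tiling. The main tool will be a cylinder decomposition of $X$ in each edge direction of the tiling.

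Let $D$ denote the set of directions of edges of the regular polygon tiling of $X$, and observe $|D|\geq 2$ since any regular polygon has at least three sides. The key structural fact I would verify is that for every $\theta\in D$, the union of $\theta$-edges on $X$ partitions the surface into a disjoint union of flat cylinders, with all cone points of $X$ on the cylinder boundaries. Two ingredients underpin this: interior points of tiles and interiors of edges have total angle $2\pi$, so every cone point of $X$ must sit at a vertex of the tiling; and at each such vertex the local combinatorial model of a regular polygon tiling forces $\theta$-edges to emanate in both directions $\pm\theta$, placing each cone point on the boundary of each $\theta$-cylinder. It then follows that every point in the interior of a $\theta$-cylinder lies on a simple closed non-singular geodesic in direction $\theta$.

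Given these cylinder decompositions, the lemma follows by case analysis. Let $p$ be a regular point of $X$ which is not a vertex. If $p$ lies in the interior of a tile, then $p$ is on no edge at all, hence is in the interior of a $\theta$-cylinder for every $\theta\in D$. If $p$ lies in the interior of a single edge $e$ of direction $\theta_0\in D$, I would choose $\theta\in D\setminus\{\theta_0\}$, which exists since $|D|\geq 2$; then $p$ is not on any $\theta$-edge, so $p$ lies in the interior of a $\theta$-cylinder. In either case, $p$ is on a simple closed non-singular geodesic and is therefore not oblivious.

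The main obstacle I anticipate is the cylinder decomposition step itself---justifying on the compact surface $X$ that each collection of $\theta$-edges assembles into finitely many closed loops whose complementary pieces are flat cylinders free of cone points. This reduces to checking that every vertex of the tiling is incident to $\theta$-edges in both directions $\pm\theta$, which I would verify by a short local count using the fact that at a vertex of cone angle $2\pi k$ exactly $2kn/(n-2)$ copies of the regular $n$-gon meet, together with the symmetry forced by the translational identifications around that vertex.
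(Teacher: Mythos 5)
Your route is genuinely different from the paper's: the paper realizes $X$ as a branched translation cover of a single regular $n$-gon (for $n$ even) or of a doubled regular $n$-gon (for $n$ odd), observes that every non-vertex point of the base lies between a pair of identified parallel sides and hence on a closed geodesic, and then lifts via Lemma~\ref{Cover Converse}. You instead argue intrinsically on $X$ via cylinder decompositions aligned with the tiling edges. The high-level strategy --- show every regular non-vertex point lies in the interior of some periodic cylinder --- is sound, but the structural claim you say you would verify is false in the hexagon case.

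Specifically, the assertion that at every vertex of the tiling the $\theta$-edges emanate in both directions $\pm\theta$ fails for $n=6$. The vertices of a hexagon tiling come in two combinatorial types; at a vertex where $3k$ hexagons meet (cone angle $2\pi k$), the outgoing edges sit at angular positions $j\cdot\tfrac{2\pi}{3}$, $j=0,\dots,3k-1$, so modulo $2\pi$ the outgoing directions are $\{0,\tfrac{2\pi}{3},\tfrac{4\pi}{3}\}$ (each with multiplicity $k$) at one type of vertex and $\{\tfrac{\pi}{3},\pi,\tfrac{5\pi}{3}\}$ at the other; neither set is invariant under $\theta\mapsto\theta+\pi$, and a translation structure preserves this bipartition. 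Consequently each $\theta$-edge runs from a vertex of one type to a vertex of the other and has a free end at each, so the union of $\theta$-edges is a disjoint union of isolated segments, not a closed $1$-complex bounding cylinders --- already on the hexagonal torus the complement of the single horizontal edge is a once-holed torus, not an annulus. The true $\theta$-cylinder boundaries are saddle connections through cone points, which are generally not unions of tiling edges, so the implication ``$p$ not on a $\theta$-edge $\Rightarrow$ $p$ lies in the interior of a $\theta$-cylinder'' breaks down. Your argument does go through for $n=3$ and $n=4$ (there the local edge directions at a vertex are $\pi$-symmetric), and vacuously for $n$ where the interior angle fails to divide $2\pi$, but the hexagon case needs either the paper's covering argument or a separate repair.
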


While this lemma could be shown by considering periodic points on the regular $n$--gon, here we provide an elementary proof to show that all non-vertex points are contained in some cylinder.

\begin{proof}
Suppose $X$ is tiled by a regular $n$--gon. We consider two cases.\\
\indent
Case 1: If $n$ is even, $X$ is a branched translation cover of the regular $n$--gon, whose opposite sides are identified, with the vertices of the $n$--gon as the branching values of the translation cover. Any non-vertex point is the preimage of a non-vertex point in the base $n$--gon. We show that just three families of cylinders contain all non-vertex points in the base $n$-gon.

For simplicity, consider the representation of the $n$--gon in the Cartesian plane where the center of the polygon is at the origin and a pair of opposite vertices are on the $y$--axis. Pick one of the edges adjacent to a vertex on the $y$--axis and consider the cylinder perpendicular to this edge together with all parallel cylinders. The blue lines in Figures \ref{regular 4ngon} and \ref{regular 4n+2gon} illustrate the only points not contained in this family of cylinders. Now pick the other pair of edges adjacent to the $y$--axis and consider its associated family of cylinders. The red lines in Figures \ref{regular 4ngon} and \ref{regular 4n+2gon} illustrate the only points not contained in this family of cylinders.

For each $x$ value the blue (alternatively, red) lines contain either one non-vertex point or two vertex points. By the reflection symmetry about the $x$--axis we have that these colored lines intersect only at vertex points and along the $x$--axis (for the case of $n=4$, the intersections are only on vertex points). We need only show that these $x$--axis intersections are contained in some third family of cylinders. 

If $n=4N + 2, N\in \N,$ there is a pair of edges perpendicular to the $x$--axis and the $x$--axis intersections are contained in the associated cylinder. If $n=4N, N\in\N,$ the $n$--gon has two vertices on the $x$--axis, and the $x$--axis intersections are contained in the cylinder associated to one of the edges adjacent to a vertex on the $x$--axis. These cylinders are shown in green in Figures \ref{regular 4ngon} and \ref{regular 4n+2gon}. Thus, every non-vertex point is contained in some closed geodesic.

\indent
Case 2: If $n$ is odd, $X$ is a branched translation cover of the doubled regular $n$--gon. The doubled regular $n$--gon is a presentation of such a surface in $\R^2$ with two regular $n$--gons with one shared (or ``glued'') side, and the parallel edges identified. We show that any non-vertex point in the base doubled regular $n$--gon is contained in just two families of cylinders. 

Take any pair of associated parallel edges and consider the cylinder decomposition parallel to the lines which connect the identified vertices on this pair of edges. The only non-vertex points not contained this family of cylinders are shown by the blue lines in Figure \ref{regular 2n+1gon}. Now consider an edge which shares a vertex with the first pair of edges and is not the ``glued'' side of the doubled $n$--gon, as well as its associated parallel edge. Again, consider the cylinder decomposition parallel to the lines which connect the identified vertices on this pair of edges. By the choice of using an adjacent edge, the boundaries of this family of cylinders, shown in red in Figure \ref{regular 2n+1gon}, intersect with the boundaries from the first family of cylinders only at vertex points. Thus, every non-vertex point is contained in a cylinder from one of these two sets of cylinders.

By Lemma $\ref{Cover Converse}$ we conclude that only vertex points may be oblivious.
\end{proof}

\begin{figure}[!h]
\begin{minipage}{.5\linewidth}
\centering
\begin{tikzpicture}
     \draw (2,0) -- (1.7321, 1)  -- (1,1.7321) -- (0,2) -- (-1,1.7321) -- (-1.7321, 1) -- (-2, 0) -- (-1.7321, -1) -- (-1, -1.7321) -- (0,-2) -- (1, -1.7321) -- (1.7321, -1) -- (2,0);
     
     \draw[color=green, fill=green!40, fill opacity = 0.5] (-1.7321, 1) -- (2,0) -- (1.7321, -1) -- (-2,0) -- (-1.7321, 1);
     
     \draw[color=blue] (0, 2) -- (1, -1.7321);
     \draw[color=blue] (-1, 1.7321) -- (0, -2);
     \draw[color=blue] (-1.7321, 1) -- (-1, -1.7321);
     \draw[color=blue] (-2, 0) -- (-1.7321, -1);
     \draw[color=blue] (1, 1.7321) -- (1.7321, -1);
     \draw[color=blue] (1.7321, 1) -- (2,0);
     
     \draw[color=red] (0, -2) -- (1, 1.7321);
     \draw[color=red] (-1, -1.7321) -- (0, 2);
     \draw[color=red] (-1.7321, -1) -- (-1, 1.7321);
     \draw[color=red] (-2, 0) -- (-1.7321, 1);
     \draw[color=red] (1, -1.7321) -- (1.7321, 1);
     \draw[color=red] (1.7321, -1) -- (2,0);
     
     \filldraw[black] (-0.53,0) circle (2pt);
     \filldraw[black] (0.53,0) circle (2pt);
     \filldraw[black] (-1.48,0) circle (2pt);
     \filldraw[black] (1.48,0) circle (2pt);
    \end{tikzpicture}
    \caption{Regular $4n$-gon with cylinder boundaries}
    \label{regular 4ngon}
\end{minipage}%
\begin{minipage}{.5\linewidth}
\centering
\begin{tikzpicture}
    
    \draw (0,2) -- (1.176, 1.618) -- (1.902, 0.618) -- (1.902, -0.618) -- (1.176, -1.618) -- (0,-2) -- (-1.176, -1.618) -- (-1.902, -0.618) -- (-1.902, 0.618) -- (-1.176, 1.618)-- (0,2);
     
     \draw[color=green, fill=green!40, fill opacity = 0.5] (1.902, -0.618) -- (1.902,-0.618) -- (-1.902, -0.618) -- (-1.902, 0.618) -- (1.902, 0.618);
     
     \draw[color=blue] (0, 2) -- (1.176, -1.618);
     \draw[color=blue] (-1.176, 1.618) -- (0, -2);
     \draw[color=blue] (-1.902, 0.618) -- (-1.176, -1.618);
     \draw[color=blue] (1.176, 1.618) -- (1.902, -0.618);
     
     \draw[color=red] (0, -2) -- (1.176, 1.618);
     \draw[color=red] (-1.176, -1.618) -- (0, 2);
     \draw[color=red] (-1.902, -0.618) -- (-1.176, 1.618);
     \draw[color=red] (1.176, -1.618) -- (1.902, 0.618);
     
     \filldraw[black] (-0.65,0) circle (2pt);
     \filldraw[black] (0.65,0) circle (2pt);
     \filldraw[black] (-1.7,0) circle (2pt);
     \filldraw[black] (1.7,0) circle (2pt);
\end{tikzpicture}
    \caption{Regular $4n+2$-gon with cylinder boundaries}
    \label{regular 4n+2gon}
\end{minipage}\par\medskip
\centering
\begin{tikzpicture}
    
    \draw (0,0.684) -- (-0.879,1.732) -- (-2.227,1.97) -- (-3.411, 1.286) -- (-3.879,0) -- (-3.411, -1.286) -- (-2.227, -1.97) -- (-0.879, -1.732) -- (0,-0.684) -- (0,0.684);
    \draw (0,0.684) -- (0.879,1.732) -- (2.227,1.97) -- (3.411, 1.286) -- (3.879,0) -- (3.411, -1.286) -- (2.227, -1.97) -- (0.879, -1.732) -- (0,-0.684) -- (0,0.684);
    
    \draw[-{Latex[length=3mm, fill=blue]}] (-3.879,0) -- (-3.567, 0.857);
    \draw[-{Latex[length=3mm, fill=blue]}] (3.411, -1.286) -- (3.723, -0.429);
    
    \draw[-{Latex[length=3mm, fill=red]}] (-3.411, 1.286) -- (-2.622, 1.742);
    \draw[-{Latex[length=3mm, fill=red]}] (-3.411, 1.286) -- (-2.819, 1.628);
    
    \draw[-{Latex[length=3mm, fill=red]}] (2.227, -1.97) -- (3.016, -1.514);
    \draw[-{Latex[length=3mm, fill=red]}] (2.227, -1.97) -- (2.819, -1.628);
    
    \draw[color=blue] (-3.411, 1.286) -- (3.879,0);
    \draw[color=blue] (-3.879,0) -- (3.411, -1.286);
    \draw[color=blue] (-0.879, -1.732) -- (-3.411, -1.286);
    \draw[color=blue] (0.879,1.732) -- (3.411, 1.286);
    \draw[color=blue]  (-0.879,1.732) -- (-2.227,1.97);
    \draw[color=blue] (2.227, -1.97) -- (0.879, -1.732);
    
    \draw[color=red] (-2.227,1.97) -- (3.411, -1.286);
    \draw[color=red] (-3.411, 1.286) -- (2.227, -1.97);
    \draw[color=red] (-3.879,0) -- (-0.879, -1.732);
    \draw[color=red] (3.879,0) -- (0.879, 1.732);
    \draw[color=red] (-3.411, -1.286) -- (-2.227, -1.97);
    \draw[color=red] (3.411, 1.286) -- (2.227, 1.97);
    
\end{tikzpicture}
    \caption{Doubled regular $2n+1$-gon with chosen edge pairs and cylinder boundaries}
    \label{regular 2n+1gon}
\end{figure}

\begin{proposition}\label{hex}
Any translation surface tiled by a regular polygon other than a triangle, square, or hexagon cannot admit an oblivious point.
\end{proposition}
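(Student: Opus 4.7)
The plan is to invoke Lemma~\ref{polygon vertex} and then perform a direct cone-angle count at the vertices of the tiling. Lemma~\ref{polygon vertex} guarantees that any oblivious point on $X$ must lie at a vertex of the $n$-gon tiling; since oblivious points are regular (non-cone) by definition, it suffices to show that no vertex of the tiling is a regular point when $n \notin \{3,4,6\}$.

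At any vertex $v$, some positive integer $m$ of regular $n$-gons meet, contributing a total angle of $m \cdot (n-2)\pi/n$, which is the cone angle at $v$. Because $X$ is a translation surface, this cone angle must be a positive integer multiple of $2\pi$. The vertex $v$ is a regular point precisely when its cone angle equals $2\pi$, i.e., when $m = 2n/(n-2) = 2 + 4/(n-2)$ is a positive integer. This happens if and only if $(n-2) \mid 4$, i.e., $n-2 \in \{1,2,4\}$, forcing $n \in \{3,4,6\}$.

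Therefore, for any $n \notin \{3,4,6\}$, no value of $m$ produces a cone angle of $2\pi$, and every vertex of the tiling is necessarily a cone point with cone angle at least $4\pi$. Combined with Lemma~\ref{polygon vertex}, no point of $X$ can be oblivious. I do not anticipate a substantive obstacle here; the whole argument reduces to the elementary divisibility observation that $2n/(n-2)$ is a positive integer exactly when $n \in \{3,4,6\}$, together with the structural fact that regularity at a tiling vertex is determined by the cone-angle formula above.
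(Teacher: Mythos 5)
Your proposal is correct and takes essentially the same route as the paper: invoke Lemma~\ref{polygon vertex} to reduce to vertices, then observe that the vertex angle $\frac{(n-2)\pi}{n}$ cannot divide $2\pi$ unless $n \in \{3,4,6\}$. You merely make the divisibility computation ($m = 2 + 4/(n-2)$, so $(n-2)\mid 4$) explicit where the paper states it as a bare fact.
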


\begin{proof}
Suppose $X$ is a translation surface tiled by a regular $n$-gon. The angle at each vertex of a regular $n$-gon is $\frac{(n-2)\pi}{n}$. Thus, at any vertex point on $X$, the angle must be a multiple of $\frac{(n-2)\pi}{n}$. However, for $n \neq 3,4,6$, $\frac{(n-2)\pi}{n}$ does not divide $2\pi$. Therefore no vertex point on $X$ is a regular point and by Lemma $\ref{polygon vertex}$ we see that $X$ cannot admit an oblivious point.
\end{proof}
\begin{remark}
Any translation surface tiled by equilateral triangles or regular hexagons is in the ${GL}_2(\mathbb{R})$ orbit of a square tiled surface and thus is also arithmetic. Proposition \ref{hex} therefore implies that any surface tiled by a regular polygon which contains an oblivious point must be arithmetic. When investigating oblivious points on translation surfaces tiled by regular polygons, we therefore need only consider square tiled surfaces.
\end{remark}

\section{Connected Translation Surfaces via Slits}\label{sec:slits}
Nguyen, Pan, and Su \cite[Example 1]{NPS}  construct a translation surface with exactly one oblivious point. One may combine this result with Lemma \ref{obl} to get a surface with $n$ oblivious points as follows. Let $X$ be a translation surface with exactly one oblivious point $x$. By Lemma \ref{obl}, for any degree $n$ translation cover of $X$, the preimages of $x$ are all oblivious. In a degree $n$ cover, there are $n$ preimages to any regular point, so a degree $n$ translation cover has $n$ preimages of $x$. For the trivial $n-$ translation cover of $n$ disconnected copies of $X$, this covering space would have $n$ oblivious points. This produces a disconnected translation surface with $n$ oblivious points, a somewhat trivial proof of our Theorem~\ref{n obl}. 

However, one would prefer to consider connected translation surfaces, and we extend this result to connected translation surfaces. One may build a connected surface with $n$ oblivious points simply by taking unbranched translation $n$--covers of the surface with one oblivious point demonstrated in~\cite{NPS}. However, by computing the Euler characteristic of these $n$--covers, one may observe that one cannot obtain a surface of every genus through this process. In order to provide a fresh construction of connected translation surfaces with $n$ oblivious points for every positive integer $n,$ and connected translation surfaces in every genus $g\in\N,$  we first introduce the notion of a \emph{slit}.

\begin{construction}[Slits]
Given two disconnected translation surfaces,  begin by choosing equal length embedded parallel straight line segments on each surface. Slit the surfaces at these segments, then glue so that the left side of one segment is identified with the right side of the other segment. The result is a connected translation surface with two additional cone points.
\end{construction}

\begin{example}
Given two disconnected tori, as in Figure~\ref{slit tori}, cut each torus along the parallel segments and glue according to the diagram. Geodesics entering the slit on one surface emerge from the associated slit on the other surface with the same trajectory as shown. In this example the two endpoints of the segment (whose copies are identified under the gluing) become a pair of cone points each with angle excess $2\pi$. 
\end{example}

\begin{figure}[h!]
\begin{center}
    \begin{tikzpicture}
    \draw[color=red] (0,0) -- (4,0);
    \draw[color=blue](4, 0) -- (4,4);
    \draw[color=red](4, 4) --(0, 4);
    \draw[color=blue](0, 4)--(0, 0);
  
    \draw[color=green] (5,0) -- (9,0);
    \draw[color=orange](9, 0) -- (9,4);
    \draw[color=green](9, 4) --(5, 4);
    \draw[color=orange](5, 4)--(5, 0);
   
   \draw (2.5, 0.5) -- (1, 2);
   \draw (7.5, 0.5) -- (6, 2);
    \draw[-{Latex[length=3mm]}] (0, 0) -- (1.5, 1.5);
    \draw[-{Latex[length=3mm]}] (6.5, 1.5) -- (7.5, 2.5);
    \draw (1.7, 0.9) node{\textbf{+}};
    \draw (2.3, 1.1) node{$\mathbf{-}$};
    \draw (7.3, 1.1) node{\textbf{+}};
    \draw (6.7, 0.9) node{$\mathbf{-}$};
    \end{tikzpicture}
    \caption{A pair of slit tori and a geodesic}
    \label{slit tori}
\end{center}
\end{figure}
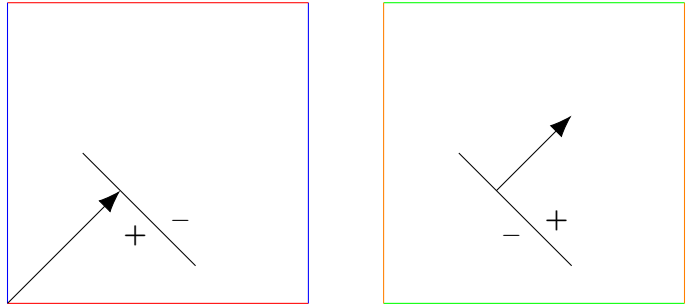

 This slit construction (together with the existence result from \cite[Example 1]{NPS}) is now used to prove the Theorem~\ref{n obl}:

\begin{proof}[Proof of Theorem~\ref{n obl}]
Let $X$ be a translation surface with exactly one oblivious point, $x$. Label $n$ disconnected copies of $X$ as $X_1, X_2, \ldots, X_n$. Choose a closed geodesic $\gamma$ on $X$ that does not contain any cone points or the oblivious point $x$. Let $y$ and $z$ be any two points on $\gamma$ and choose one segment of $\gamma$ with endpoints $y$ and $z$. Call this segment $E$.

Define $y_i$ and $z_i$ on $X_i$ to be the preimages of $y$ and $z$ under the trivial disjoint degree $n$ covering map from $\bigcup\limits_{i=1}^{n} X_i$ to $X$, and similarly define $E_i$ as the preimage of edge $E$. Connect the $X_i$ by ``slitting" them together at the edges $E_i$.

Associate slits such that each edge $E_i$ leads into the edge $E_{i+1 (mod~n)}$ (or any permutation of associating the edges which is an $n$-cycle). When traversing from $E_{i}$ to $E_{i+1}$, a geodesic emerges on $X_{i+1}$ in the equivalent position that it would have continued on $X_i$ were the edges not associated. Under this association the identified $y_i$ and $z_i$ each become cone points of angle $2\pi n$. One has therefore connected the disjoint $X_i$, and this connected space is now a degree $n$ connected translation cover for $X$. By Lemma \ref{obl}, the $n$ preimages of $x$ are oblivious. 
\end{proof}

\begin{theorem}\label{genus1}
For every $g\ge 4$ there exists a surface $Y$ of genus $g$ with an oblivious point on it.
\end{theorem}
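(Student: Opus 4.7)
The plan is to combine the cyclic slit cover construction from the proof of Theorem~\ref{n obl} with Theorem~\ref{NPS3.1} to realize every genus $g \geq 4$. First, I would generalize the cyclic $n$-slit cover to allow $m \geq 1$ disjoint parallel embedded slit segments on the base surface $X$ (each avoiding cone points and the oblivious point), cyclically identifying across each slit. This still yields a degree-$n$ branched translation cover of $X$, now with $2m$ branching values of ramification $n$, and combinatorial Gauss--Bonnet gives cover genus
\[ g_Y = n(g_0 - 1) + m(n - 1) + 1, \]
where $g_0 = g(X)$. Lemma~\ref{obl} ensures that each of the $n$ preimages of the oblivious point of $X$ remains oblivious in the cover. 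Specializing to $n = 2$ with $m \geq 1$ yields $g_Y = 2 g_0 + m - 1$, so this construction realizes every genus $\geq 2g_0$ starting from a single base of genus $g_0$ with an oblivious point.

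Next I would handle the smaller genera, which are not reached by the slit cover of a given base, by applying Theorem~\ref{NPS3.1} directly to semi-translation surfaces in appropriate strata. Using the canonical double cover rules of Section~\ref{background}, one checks that the genus-$2$ semi-translation surfaces in $\mathcal{Q}(-1, 5)$, $\mathcal{Q}(-1, 1, 1, 3)$, and $\mathcal{Q}(-1, 1, 1, 1, 1, 1)$ have canonical double covers in $\mathcal{H}(6)$, $\mathcal{H}(2, 2, 4)$, and $\mathcal{H}(2,2,2,2,2)$, giving translation surfaces of genera $4, 5, 6$ with oblivious points; similarly, the genus-$3$ stratum $\mathcal{Q}(-1, 3, 3, 3)$ yields a translation surface in $\mathcal{H}(4, 4, 4)$ of genus $7$. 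Using a genus-$4$ base for the slit cover then handles all $g \geq 8$, so in combination these constructions cover every $g \geq 4$.

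More generally, by computing $g_{\mathrm{trans}} = 2g_{\mathrm{sst}} - 1 + \tfrac{1+p}{2}$ (where $p$ is the number of odd-positive $l_i$ entries of the $\mathcal{Q}$-stratum with a single $-1$), the achievable translation genera from a semi-translation surface of fixed genus $g_{\mathrm{sst}} \geq 2$ form the contiguous interval $\{2g_{\mathrm{sst}}, \ldots, 4g_{\mathrm{sst}} - 2\}$, and these intervals overlap for all $g_{\mathrm{sst}} \geq 2$, giving an alternate route to the same conclusion purely through Theorem~\ref{NPS3.1}.

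The main technical obstacle is verifying the existence of semi-translation surfaces in each required low-complexity stratum; these can be built explicitly by polygonal assembly in the spirit of the NPS example, after which the slit cover construction and Lemma~\ref{obl} do the rest of the work.
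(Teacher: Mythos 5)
Your proof is correct in substance and takes a genuinely different route from the paper's. The paper does not use the slit cover at all in this theorem: it applies Theorem~\ref{NPS3.1} directly, citing Masur--Smillie for nonemptiness of $\Q(-1, 4m+1)$ (giving $\H(4m+2)$, genus $2m+2$) and $\Q(-1, 1, 1, 4m-1)$ (giving $\H(2,2,4m)$, genus $2m+3$) to hit all even $g\ge4$ and odd $g\ge5$ in one stroke. Your first strategy, by contrast, is a hybrid: you build seed surfaces of genus $4,5,6,7$ via canonical double covers of $\Q(-1,5)$, $\Q(-1,1,1,3)$, $\Q(-1,1^5)$, and $\Q(-1,3,3,3)$ (all computations check out: one $\pi$ cone point in each, double covers in $\H(6)$, $\H(2,2,4)$, $\H(2,2,2,2,2)$, $\H(4,4,4)$, genera $4$ through $7$), and then propagate upward by the $m$-slit cyclic double cover, whose Riemann--Hurwitz count $g_Y = n(g_0-1) + m(n-1) + 1$ specializing at $n=2$ gives $g_Y = 2g_0 + m - 1 \ge 2g_0$; with $g_0 = 4$ this fills in all $g\ge8$, and Lemma~\ref{obl} carries the oblivious point up. This is attractive because it shows the two constructions of Sections~\ref{sec:trans} and~\ref{sec:slits} together already suffice, and it makes transparent why genus $3$ is out of reach by either method. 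Your ``more general'' final paragraph, with $g_{\mathrm{trans}} = 2g_{\mathrm{sst}} + \tfrac{p-1}{2}$ and the overlapping intervals $\{2g_{\mathrm{sst}},\ldots,4g_{\mathrm{sst}}-2\}$, recovers essentially the paper's argument in parametrized form and is also correct.

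One caveat: you should cite Masur--Smillie \cite[Theorem 2(c)]{MS} for nonemptiness of the strata $\Q(-1,5)$, $\Q(-1,1,1,3)$, etc., as the paper does, rather than deferring to ``explicit polygonal assembly''; without either a citation or an actual construction, that is the one real gap in the write-up. (All the strata you need lie outside the short list of empty strata, so the citation is immediate.) Minor point worth stating explicitly: the $m$ slit segments on the base must avoid one another as well as the cone points and the oblivious point, and you should remark that the resulting branch points have cone angle $2\pi n > 2\pi$, so a non-singular closed geodesic in the cover cannot pass through them, which is what makes Lemma~\ref{obl} and Lemma~\ref{ob2} applicable.
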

\begin{proof}
Masur and Smillie \cite[Theorem 2(c)]{MS} state that every stratum of semitranslation surfaces except for $\Q(4), \Q(3, 1), \Q(1, -1)$ and $\Q(0)$ is non-empty.

\textbf{Case I: Even genus $\ge 4$}

In particular, for every $m \in\N$ the stratum $\Q(-1, 4m+1)$ is nonempty. For every $X\in \Q(-1, 4m+1)$, let $Y$ be the canonical double cover of $X$. Since $X$ has a single $\pi$ cone point, by Theorem \ref{NPS3.1}, $Y$ contains an oblivious point. By Kontsevich and Zorich \cite[Lemma 1]{KZ} $Y\in\H(4m+2)$. By combinatorial Gauss--Bonnet, $Y$ is a surface of genus $2m+2$.

\textbf{Case II: Odd genus $\ge 5$}
    
By the same theorem of Masur and Smillie, for every $ m\in \N$ the stratum $\Q(-1, 1, 1, 4n-1)$ is nonempty. For every  $X\in \Q(-1, 1, 1, 4m-1)$, let $Y$ be the canonical double cover of $X$. Since $X$ has a single $\pi$ cone point, by Theorem \ref{NPS3.1}, $Y$ contains an oblivious point. By Kontsevich and Zorich \cite[Lemma 1]{KZ}, $Y\in\H(2, 2, 4m)$. By combinatorial Gauss--Bonnet, $Y$ is a surface of genus $2m+3$.
\end{proof}

\begin{proposition}\label{4.4}
There are no surfaces with an oblivious point in genera 1 and 2. 
\end{proposition}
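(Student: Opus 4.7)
The plan is to handle the two genera separately.

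For genus $1$, the combinatorial Gauss--Bonnet theorem forces $\sum k_i = 2g-2 = 0$. Since each cone angle excess is a nonnegative integer, the surface has no cone points, and is therefore a flat torus $\R^2/\Lambda$. In any direction parallel to a nonzero lattice vector, the torus foliates into parallel simple closed geodesics through every point, so no point is oblivious.

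For genus $2$, the surface lies in stratum $\H(2)$ or $\H(1,1)$. The plan is to show that every regular point $p$ lies in the interior of some cylinder, which immediately yields a simple closed geodesic through $p$. Fix such a $p$. Each saddle connection of the surface determines at most one direction in which $p$ lies on it (the direction from $p$ to an endpoint of the saddle connection, up to orientation), so the set of ``bad'' directions for $p$ is at most countable.

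To produce a good direction I would invoke the structure theory of abelian differentials in genus $2$: for any translation surface of genus $2$, the set of completely periodic directions---directions in which the surface decomposes entirely into parallel cylinders bounded by saddle connections---is dense in $S^1$. Choosing a completely periodic direction $\theta$ outside the countable bad set for $p$, the point $p$ sits in the interior of some cylinder of the $\theta$-decomposition, and the core closed geodesic through $p$ in this direction witnesses that $p$ is not oblivious.

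The main obstacle is the density claim for completely periodic directions in genus $2$. For Veech surfaces this follows from the Veech dichotomy (every direction is either uniquely ergodic or completely periodic, with parabolic fixed points of the Veech group dense in $S^1$), but for general genus $2$ surfaces it requires invoking results of Calta and McMullen on the structure of the $\H(2)$ and $\H(1,1)$ eigenform loci. Once this structural input is accepted, the remainder of the argument---reducing obliviousness to membership in some cylinder and then counting directions---is elementary.
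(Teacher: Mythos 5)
Your genus $1$ argument is fine and elementary. The genus $2$ argument, however, has a genuine gap exactly where you flag the ``main obstacle.'' The density of completely periodic directions is \emph{not} a theorem for all genus $2$ translation surfaces. Calta's result gives that every surface in $\H(2)$ is completely periodic (every cylinder direction decomposes fully), and since saddle-connection directions are dense your argument closes in that stratum. But in $\H(1,1)$, Calta and McMullen show that complete periodicity is \emph{equivalent} to being an eigenform for real multiplication; a generic surface in $\H(1,1)$ is not an eigenform and has only finitely many completely periodic directions. For such a surface, a typical cylinder direction decomposes into one or two cylinders plus a minimal component, and there is no a priori reason your chosen point $p$ lies in a cylinder rather than in the minimal part. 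So the reduction ``avoid a countable bad set, then $p$ sits in a cylinder core'' does not go through in $\H(1,1)$ as stated, and the structural input you would need is precisely the content you are trying to prove.

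The paper sidesteps all of this with a two-line argument: Kontsevich--Zorich show that in genus $1$ and $2$ every stratum is connected and coincides with its hyperelliptic component, and Nguyen--Pan--Su (Theorem~3 of \cite{NPS}) show that no surface in a hyperelliptic component admits an oblivious point. This uses the hyperelliptic involution rather than cylinder decompositions and avoids any appeal to complete periodicity. If you want to salvage a dynamical argument for $\H(1,1)$, you would need to use the hyperelliptic structure (e.g.\ pass to the quotient quadratic differential on the sphere) rather than complete periodicity; the route through Calta/McMullen eigenforms does not cover the whole stratum.
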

\begin{proof}
Kontsevich and Zorich \cite[Theorem 2]{KZ} state that every stratum in genus 1 and 2 is connected and coincides with its hyperelliptic component.Nygyen, Pan and Su \cite[Theorem 3]{NPS} state that no surface in the hyperelliptic component of any stratum admits an oblivious point. Hence no surface of genus 1 or 2 contains an oblivious point
\end{proof}

\begin{remark}\label{4.5}
No genus 3 surface containing an oblivious point can be constructed through Theorem \ref{NPS3.1}.
The argument in Theorem \ref{genus1} fails in genus 3. Even though the relevant stratum, $\Q(1, 1, -1, -1)$, is non-empty, this stratum fails the hypotheses of Theorem \ref{NPS3.1} as it admits two cone points of cone angle $\pi$.

Table~\ref{tabs} lists all possible strata of translation surfaces in genus 3, and the strata of semitranslation surfaces they can double cover. Since none of the semitranslation surface strata have exactly one cone point of cone angle $\pi$, no semitranslation surface stratum which can be double covered to produce a genus 3 surface satisfies the hypothesis of Theorem \ref{NPS3.1}.
\end{remark}

\begin{table}
\begin{center}
\begin{tabular}{ |c|c| } 
 \hline
A translation surface in & can be the canonical \\ 
& double cover of something in\\
 \hline
 $\H(4)$ & $\Q(3, -1^{3n + 4})$  \\ 
 $\H(3, 1)$& $\varnothing$  \\ 
 $\H(2, 2)$& $\Q(4, -1^{4n}), \Q(1, 1, -1^{4n+2})$\\
 $\H(2, 1, 1)$ & $\Q(1, 2, -1^{4n+3})$\\
 $\H(1, 1, 1, 1)$ & $\Q(2, 2, -1^4)$\\
 \hline
\end{tabular}
\caption{Possible strata for a genus 3 translation surface}
\label{tabs}
\end{center}
\end{table}

\section{Blocking Sets to Oblivious Points}\label{sec:block}

We provide an alternate construction that may be used to prove the main results from the previous section, generating oblivious points by branching over specific points on a square torus. This technique works in all genus $g \geq 3$, thereby proving Theorem~\ref{g geq 3}, which may not be proved by the previous methods as shown.
\subsection{Blocking Sets}

For any surface $X$ and any point $x\in X$, a finite set of points $P = \{p_1,\dots,p_n\}\subseteq X$ is called a \textit{blocking set} of $x$ if every closed geodesic through $x$ contains a point in the blocking set. $x$ is said to be blocked by $P$.

The set $P = \{(0, \frac12), (\frac12, 0), (\frac12, \frac12) \}$ is known to block the point $(0, 0)$ on the unit square torus (see Figure~\ref{blockset}). For a detailed discussion of blocking sets on tori, see Leli\`evre, Monteil, and Weiss \cite[Section 6]{LMW}. \\

\begin{figure}[h!]
\begin{center}
    \begin{tikzpicture}
    \draw[color=red] (0,0) -- (4,0);
    \draw[color=blue](4, 0) -- (4,4);
    \draw[color=red](4, 4) --(0, 4);
    \draw[color=blue](0, 4)--(0, 0);
    \draw (0, 0) node{O};
    \draw (2, 0) node{$p_1$};
    \draw (0, 2) node{$p_2$};
    \draw (2, 2) node{$p_3$};
    \end{tikzpicture}
    \caption{A blocking set for $O$}
    \label{blockset}
\end{center}
\end{figure}
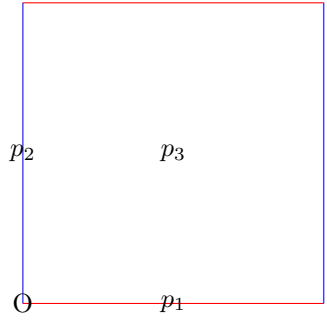

 \begin{lemma}\label{ob2}
Let $X$ be a translation surface with point $x$ and blocking set $P$ of $x$. Suppose $Y$ is a fully ramified branched translation cover with a translation cover $\rho: Y \rightarrow X$ with $\rho^{-1}(P)$ being cone points. Then any point in $\rho^{-1}(x)$ is oblivious.
\end{lemma}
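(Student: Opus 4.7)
The plan is to argue by contradiction, modeled closely on the pushdown argument used in Lemma~\ref{obl}. Suppose some $y \in \rho^{-1}(x)$ is not oblivious; then there exists a simple closed non-singular geodesic $\gamma \subseteq Y$ passing through $y$. The strategy is to project $\gamma$ down to $X$ via $\rho$ and use the blocking set to force $\gamma$ to meet a cone point, contradicting its non-singularity.

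First I would verify that $\rho(\gamma)$ is a closed geodesic in $X$ passing through $x$. Since $\gamma$ is non-singular it avoids the branch locus $B$; in particular $\gamma \cap \rho^{-1}(P) = \emptyset$ by our hypothesis that $\rho^{-1}(P)$ consists of cone points. On $Y \setminus B$ the map $\rho$ is a local isometry, so $\rho(\gamma)$ is locally geodesic wherever defined. Continuity and compactness of $\gamma$ give that $\rho(\gamma)$ is a closed curve, and since $y \in \gamma$ maps to $\rho(y) = x$, we conclude that $\rho(\gamma)$ is a closed geodesic in $X$ through $x$. This step is essentially identical to the argument in Lemma~\ref{obl}; the only subtle point is that $P$ may contain regular points of $X$, so $\rho(\gamma)$ is allowed to hit $P$ (indeed we will force it to). This distinction is harmless because the relevant local-isometry condition is controlled in $Y$, not $X$.

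Next I would invoke the defining property of a blocking set: every closed geodesic through $x$ must meet $P$. Applying this to $\rho(\gamma)$ yields a point $p \in P \cap \rho(\gamma)$, and therefore some $z \in \gamma$ with $\rho(z) = p$. Then $z \in \rho^{-1}(P)$, which by hypothesis consists of cone points of $Y$. Hence $\gamma$ passes through the cone point $z$, contradicting the assumption that $\gamma$ is non-singular. This contradiction shows that no simple closed non-singular geodesic through $y$ can exist, so $y$ is oblivious.

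The main (and essentially only) obstacle is the bookkeeping about what ``non-singular'' means on each side of $\rho$: on $Y$ the cone points include the new ones created by branching over $P$, while on $X$ the points of $P$ may themselves be regular. The hypothesis that $\rho^{-1}(P)$ consists of cone points is exactly what reconciles this, letting the blocking condition on $X$ translate into a singularity condition on $Y$. Once that is articulated cleanly, the rest of the argument is an immediate consequence of the pushdown technique already established in Lemma~\ref{obl}.
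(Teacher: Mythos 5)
Your proof is correct and follows essentially the same approach as the paper: assume $y\in\rho^{-1}(x)$ lies on a non-singular closed geodesic $\gamma$, push $\gamma$ down via the local-isometry argument from Lemma~\ref{obl} to get a closed geodesic through $x$ in $X$, invoke the blocking property to force $\rho(\gamma)\cap P\neq\emptyset$, and pull back to conclude $\gamma$ meets a cone point in $\rho^{-1}(P)$, a contradiction. Your added remark --- that $P$ may consist of regular points of $X$, so one should only use the ``closed geodesic'' part of the pushdown and not the non-singularity clause --- correctly identifies a subtlety the paper handles implicitly by citing only the weaker conclusion of the pushdown lemma.
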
 
\begin{proof}
Any closed geodesic through $x$ will contain a point in $P$. Suppose $y \in \rho^{-1}(x)$ is not oblivious. Then there is some non-singular closed geodesic $\gamma$ containing $y$ in $Y$. By Lemma $3.5$ $\rho(\gamma)$ is a closed geodesic containing $x$. However, this implies that $\rho(\gamma) \cap P \neq \emptyset$. Thus $\gamma \cap \rho^{-1}(P) \neq \emptyset$. By assumption any point in $\rho^{-1}(P)$ is a cone point, but $\gamma$ is non-singular, a contradiction.
\end{proof} 

Using the slit construction, create a translation surface $Y$ with four cone points, each with cone angle $4\pi$ (see Figure~\ref{block}). This is achieved by identifying slits on two square tori with marked points at $P=\{(0,1/2), (1/4,1/2), (1/2,1/2), (1/2,0)\}$. As $P$ is a blocking set on each torus for the point at the origin, any closed geodesic on $Y$ containing $y_1$ or $y_2$ must pass through a cone point.

\begin{figure}[h!]
\begin{center}
    \begin{tikzpicture}
    \draw[color=red] (0,0) -- (4,0);
    \draw[color=blue](4, 0) -- (4,4);
    \draw[color=red](4, 4) --(0, 4);
    \draw[color=blue](0, 4)--(0, 0);
    \draw (0, 0) node{$y_1$};
    \draw (2, 0) node{X} -- (2, 2) node{X};
    \draw (0, 2) node{X} -- (1, 2) node{X};
    \draw[color=green] (5,0) -- (9,0);
    \draw[color=orange](9, 0) -- (9,4);
    \draw[color=green](9, 4) --(5, 4);
    \draw[color=orange](5, 4)--(5, 0);
    \draw (5, 0) node{$y_2$};
    \draw (7, 0) node{X} -- (7, 2) node{X};
    \draw (5, 2) node{X} -- (6, 2) node{X};
    \draw (0.5, 2.3) node{$A_1$};
    \draw (0.5, 1.8) node{$A_2$};
    \draw (1.7, 1) node{$B_1$};
    \draw (2.2, 1) node{$B_2$};
    \draw (5.5, 2.3) node{$A_2$};
    \draw (5.5, 1.8) node{$A_1$};
    \draw (6.7, 1) node{$B_2$};
    \draw (7.2, 1) node{$B_1$};
    
    \end{tikzpicture}
    \caption{Two tori connected via slits}
    \label{block}
\end{center}
\end{figure}
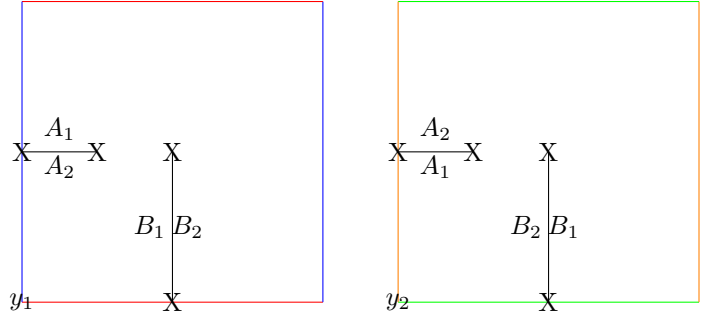

Now we generalise this construction to prove the main results from the previous section. We note in the lemma $\rho$ by definition is fully ramified and hence the following constructions will produce fully ramified covers.

\begin{construction}[2 oblivious points]
Consider the square torus $X$ with marked points $P = \{p_1 = (\frac12, 0), p_2 = (0, \frac12), p_3=(\frac12, \frac12), p_4=(\frac14, \frac12)\}$ and $x$ the point at the origin. Since $P$ contains a blocking set, $x$ is blocked by $P$.

Now consider $ (Y, \rho)$, a translation double cover of $X$ branched over $P$ with slit identification as shown in Figure 4. The points marked with ``X" are the preimages of the $p_i$ and are cone points of angle $4\pi$ each. Since $x$ is blocked in $(X, P)$ and $\rho^{-1}\{x\} = \{y_1, y_2\}$, Lemma~\ref{ob2} implies that $y_1$ and $y_2$ are both oblivious in $Y$.
\end{construction}

\begin{construction}[n oblivious points]
Consider $ (Z, \sigma: Z\to X)$, a degree $n$ translation cover of $X$ branching over $P$ with cyclic slit identification as shown in Figure~\ref{nslits}. 

\begin{figure}[h!]
\begin{center}
    \begin{tikzpicture}
    \draw[color=red] (0,0) -- (4,0);
    \draw[color=blue](4, 0) -- (4,4);
    \draw[color=red](4, 4) --(0, 4);
    \draw[color=blue](0, 4)--(0, 0);
    \draw (0, 0) node{$x_1$};
    \draw (2, 0) node{X} -- (2, 2) node{X};
    \draw (0, 2) node{X} -- (1, 2) node{X};
    \draw[color=green] (5,0) -- (9,0);
    \draw[color=orange](9, 0) -- (9,4);
    \draw[color=green](9, 4) --(5, 4);
    \draw[color=orange](5, 4)--(5, 0);
    \draw (5, 0) node{$x_2$};
    \draw (7, 0) node{X} -- (7, 2) node{X};
    \draw (5, 2) node{X} -- (6, 2) node{X};
    \draw (0.5, 2.3) node{$A_1$};
    \draw (0.5, 1.8) node{$A_2$};
    \draw (1.7, 1) node{$B_1$};
    \draw (2.2, 1) node{$B_2$};
    \draw (5.5, 2.3) node{$A_2$};
    \draw (5.5, 1.8) node{$A_3$};
    \draw (6.7, 1) node{$B_2$};
    \draw (7.2, 1) node{$B_3$};
\end{tikzpicture}
\end{center}

\begin{flushright}
    \begin{tikzpicture}
    \foreach \point in {(-1,0),(-0.85,0),(-0.7,0)}{
    \fill \point circle (2pt);
    }
    \draw[color=violet] (0,0) -- (4,0);
    \draw[color=olive](4, 0) -- (4,4);
    \draw[color=violet](4, 4) --(0, 4);
    \draw[color=olive](0, 4)--(0, 0);
    \draw (0, 0) node{$x_{n-1}$};
    \draw (2, 0) node{X} -- (2, 2) node{X};
    \draw (0, 2) node{X} -- (1, 2) node{X};
    \draw[color=magenta] (5,0) -- (9,0);
    \draw[color=teal](9, 0) -- (9,4);
    \draw[color= magenta](9, 4) --(5, 4);
    \draw[color=teal](5, 4)--(5, 0);
    \draw (5, 0) node{$x_n$};
    \draw (7, 0) node{X} -- (7, 2) node{X};
    \draw (5, 2) node{X} -- (6, 2) node{X};
    \draw (0.5, 2.3) node{$A_{n-1}$};
    \draw (0.5, 1.8) node{$A_n$};
    \draw (1.6, 1) node{$B_{n-1}$};
    \draw (2.2, 1) node{$B_n$};
    \draw (5.5, 2.3) node{$A_n$};
    \draw (5.5, 1.8) node{$A_1$};
    \draw (6.7, 1) node{$B_n$};
    \draw (7.2, 1) node{$B_1$};
    \end{tikzpicture}
\end{flushright}
\caption{A construction with $n$ oblivious points}
\label{nslits}
\end{figure}
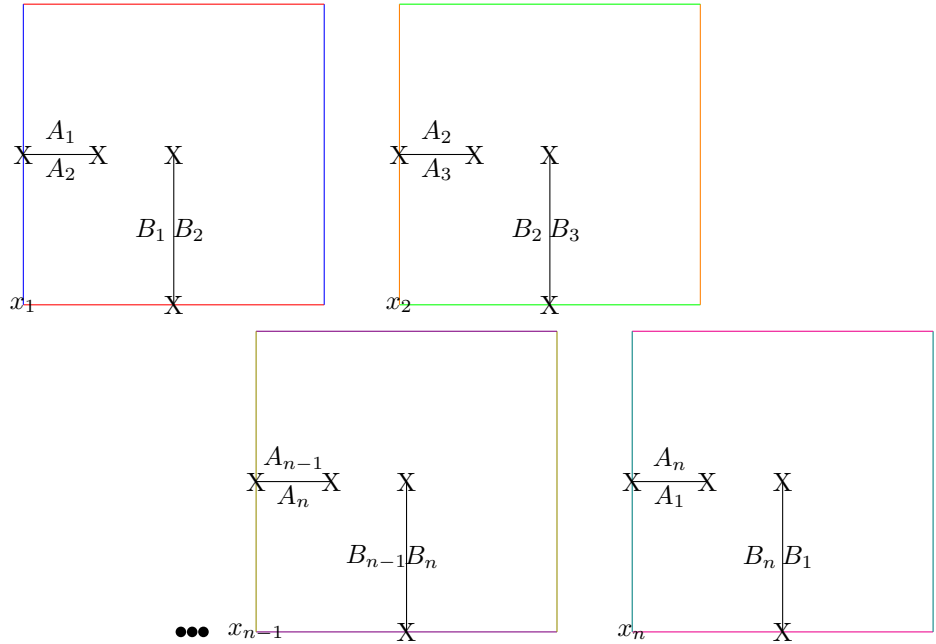

Note that there are only $4$ cone points each with angle $2\pi n$ at the identified preimages of the original $4$ marked points. We have $\rho^{-1}\{x\} = \{x_1,\dots, x_n\}$ and since $x$ is blocked in $(X, P)$, by Lemma \ref{ob2}, all the $x_i$ are oblivious in $Y$. By Lemma \ref{Cover Converse}, we know that all non-oblivious points in $X$ have non-oblivious preimages, so our cover construction has exactly $n$ oblivious points. 
\end{construction}

One is finally able to prove Theorem~\ref{g geq 3}.

\begin{proof}[Proof of Theorem~\ref{g geq 3}]
The previous construction yields $4$ cone points of angle $2\pi n$. For $n=2$ this amounts to having angle excess $8\pi$ which corresponds to a genus $3$ surface by Gauss-Bonnet. Together with our Theorem~\ref{genus1} this proves Theorem~\ref{g geq 3}.

In general, the construction yields a genus $2n-1$ surface by Gauss-Bonnet. To obtain even genera one may take our genus $2n-1$ construction and add a slit between any pair of squares from the coordinates $(5/8,7/8)$ to $(7/8,5/8)$. This slit adds 2 cone points with angle excess $2\pi$ each, therefore resulting in a genus $2n$ surface. 
\end{proof}

\section{Tilings and Oblivious Points}\label{sec:tilings}
\begin{corollary}
For every $n\ge 4$, there is a square tiled surface with $2n$ tiles and exactly one oblivious point.
\end{corollary}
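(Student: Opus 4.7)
The plan is to apply Theorem~\ref{NPS3.1} by exhibiting, for each $n\ge 4$, a semi-translation surface $X_n$ in the stratum $\Q(-1,5)$ whose canonical double cover is a square-tiled surface with $2n$ tiles. Recall that the pillowcase $P$, the semi-translation quotient of the unit square torus $T$ by $z\mapsto -z$, has four cone points of angle $\pi$, and the quotient map realizes $T$ as the canonical double cover of $P$, branched exactly over these four cone points. Consequently, if $X\to P$ is any degree-$d$ branched translation cover, then its canonical double cover $\widetilde X$ is a degree-$d$ translation cover of $T$; when the ramification of $\widetilde X\to T$ concentrates over a single point of $T$, $\widetilde X$ is a square-tiled surface with $d$ tiles.

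Fix $n\ge 4$, set $d=2n$, and take $X_n$ to be a degree-$2n$ branched cover of $P$ with ramification indices $(1,7,2,2,\ldots,2)$ above one chosen pillowcase cone point $p_1$ (with $n-4$ twos) and all ramification indices equal to $2$ above each of the three remaining cone points $p_2,p_3,p_4$. A Riemann--Hurwitz calculation yields $g(X_n)=2$ and cone data exactly $\{\pi,7\pi\}$, so $X_n\in\Q(-1,5)$ and has precisely one cone point of angle $\pi$. Since the ramification above $p_2,p_3,p_4$ is entirely even, a local fiber-product argument shows that the canonical double cover $\widetilde X_n\to T$ is unramified above the $2$-torsion points corresponding to $p_2,p_3,p_4$, and is ramified only above $\widetilde p_1$, with the unique odd preimage there (of ramification index $7$) yielding a single cone point of $\widetilde X_n$ of angle $14\pi$. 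Hence $\widetilde X_n$ is a square-tiled surface in $\H(6)$ with exactly $2n$ tiles, and Theorem~\ref{NPS3.1} produces a unique oblivious point (the preimage of the $\pi$-cone of $X_n$).

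The main obstacle is verifying the existence of the branched cover $X_n\to P$ with the prescribed ramification data for each $n\ge 4$: one must produce a transitive tuple of permutations $\sigma_1,\sigma_2,\sigma_3,\sigma_4\in S_{2n}$ of cycle types $(1,7,2^{n-4}),(2^n),(2^n),(2^n)$ whose product is the identity. The parity condition on the product of signs is automatically satisfied; for $n=4$ one writes down such a tuple in $S_8$ by hand, and for $n>4$ one extends inductively by adjoining disjoint $2$-cycles to each $\sigma_i$ in a way that preserves transitivity.
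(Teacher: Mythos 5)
Your route is genuinely different from the paper's. The paper proves the corollary concretely: it takes an explicit $n$-square L-shaped polyomino with arrow (semi-translation) identifications, observes it has exactly one cone point of angle $\pi$, and takes its canonical double cover to get a $2n$-tile STS with a unique oblivious point by Theorem~\ref{NPS3.1}; the passage from $n$ to $n+1$ is simply appending one more square to the L. You instead work abstractly through the pillowcase $P = T/\{\pm 1\}$, postulating a degree-$2n$ branched cover $X_n \to P$ with ramification $(1,7,2^{n-4})$ over one corner and $(2^n)$ over the other three, so that $X_n \in \Q(-1,5)$, and identifying the canonical double cover $\widetilde X_n$ with the fiber product $X_n\times_P T$, which is a degree-$2n$ STS in $\H(6)$. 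The Riemann--Hurwitz computation and the fiber-product ramification analysis are correct. (A small imprecision: over $p_1$ there are \emph{two} preimages of odd ramification index, the index-$1$ point and the index-$7$ point, and both are branch points of $\widetilde X_n\to X_n$; only the index-$7$ point produces a cone point of $\widetilde X_n$, since the index-$1$ point's preimage has total angle $2\pi$.)

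There is, however, a genuine gap in the existence of the cover $X_n\to P$ for every $n\geq 4$. You need transitive tuples $(\sigma_1,\sigma_2,\sigma_3,\sigma_4)$ in $S_{2n}$ with the stated cycle types and $\sigma_1\sigma_2\sigma_3\sigma_4=\mathrm{id}$. Your inductive step — ``adjoining disjoint $2$-cycles to each $\sigma_i$ in a way that preserves transitivity'' — cannot be carried out as stated: if one adjoins the disjoint transposition $(2n{+}1,\,2n{+}2)$ to every $\sigma_i$, the set $\{2n+1,2n+2\}$ becomes an invariant block of the generated group, so transitivity is necessarily lost. Repairing this requires a non-trivial modification of the cycle structure (for instance a slit/connected-sum with a degree-$2$ pillowcase cover along a saddle connection, which changes the ramification at its endpoints, or a direct appeal to results on the Hurwitz existence problem over genus $0$ with four branch points). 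The paper's polyomino construction avoids this issue entirely, since growing the L by one square visibly preserves the single-$\pi$-cone hypothesis of Theorem~\ref{NPS3.1} and increases the tile count by two.
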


\begin{figure}[h!]
\begin{center}
    \begin{tikzpicture}
    \draw (0, 0) rectangle (1,1);
    \draw (1, 0) rectangle (2, 1);
    \draw (0, 1) rectangle (1, 2);
    \draw (0, 2) rectangle (1, 3);
    \draw[-{Latex[length=2.3mm]}] (0, 2) -- (0, 2.5);
    \draw[-{Latex[length=2.3mm]}] (0, 2) -- (0, 1.5);
    \draw[-{Latex[length=2.3mm]}] (1, 2) -- (1, 1.6);
    \draw[-{Latex[length=2.3mm]}] (1, 2) -- (1, 1.45);
    \draw[-{Latex[length=2.3mm]}] (2, 0) -- (2, 0.45);
    \draw[-{Latex[length=2.3mm]}] (2, 0) -- (2, 0.6);
    \draw[color=red](0, 0) -- (0, 1);
    \draw[color=red](1, 2) -- (1, 3);
    \draw (2, 0) -- (2, .6);
    \end{tikzpicture}$\quad$
    \begin{tikzpicture}
    \draw [-{Latex[length=2.3mm]}] (0, 1.5) -- (1, 1.5);
    \draw (0, 0) -- (0, 0.0001);
    \end{tikzpicture}$\quad$
    \begin{tikzpicture}
      \draw (0, 0) rectangle (1,1);
    \draw (1, 0) rectangle (2, 1);
    \draw (0, 1) rectangle (1, 2);
    \draw (0, 2) rectangle (1, 3);
    \draw (2, 0) rectangle (3, 1);
    \draw[color=red](0, 0) -- (0, 1);
    \draw[color=red](1, 2) -- (1, 3);
 \draw[-{Latex[length=2.3mm]}] (0, 2) -- (0, 2.5);
    \draw[-{Latex[length=2.3mm]}] (0, 2) -- (0, 1.5);
    \draw[-{Latex[length=2.3mm]}] (1, 2) -- (1, 1.6);
    \draw[-{Latex[length=2.3mm]}] (1, 2) -- (1, 1.45);
    \draw[-{Latex[length=2.3mm]}] (3, 0) -- (3, 0.45);
    \draw[-{Latex[length=2.3mm]}] (3, 0) -- (3, .6);
    \draw (2, 0) node {A};
    \end{tikzpicture}
\end{center}
\caption{Semi-translation surfaces used to construct translation surfaces with $2n$ tiles and one oblivious point}
\end{figure}
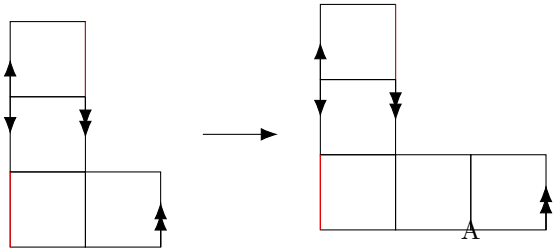
\begin{proof}
If one takes the canonical double cover of the 4-square L shown above, one gets an 8-square STS with one oblivious point by Theorem \ref{NPS3.1} and Lemma \ref{NPS3.2i} (the conditions of Lemma \ref{NPS3.2i} are met with Lemma \ref{polygon vertex} because all vertex points are one cone point). One may add a square to this base L to get a 5-tiled L which becomes a 10-square STS with an oblivious point (the conditions of Lemma \ref{NPS3.2i} are still met because only one vertex point, $A$, has been added and it is contained within the simple closed geodesic that is the vertical line shown). If one continues to add squares in this manner, one can get an even-tiled STS with an oblivious point for any even number greater than or equal to 8.
\end{proof}

\begin{lemma}\label{6.2}
There is no square-tiled surface with two or four tiles and an oblivious point
\end{lemma}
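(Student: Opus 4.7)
The plan is to combine the covering-space description of square-tiled surfaces with Proposition~\ref{4.4}: it suffices to show that every square-tiled surface with $2$ or $4$ tiles has genus at most $2$, since that proposition already forbids oblivious points in genera $1$ and $2$. Recall that an STS with $n$ tiles is a degree-$n$ branched translation cover $\rho : Y \to \T^2$ of the square torus whose only branching value is the vertex $v$. If the ramification indices of the points in $\rho^{-1}(v)$ form a partition $(e_1, \dots, e_k)$ of $n$, then each $p_i \in \rho^{-1}(v)$ is a cone point of angle $2\pi e_i$ (regular if $e_i = 1$), and combinatorial Gauss--Bonnet becomes
\[
\sum_{i=1}^{k} (e_i - 1) \;=\; 2g - 2.
\]
Since $g$ is an integer, the left-hand side must be even; in particular, partitions with odd excess sum $\sum(e_i-1)$ cannot correspond to any actual STS.

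For $n = 2$, the only partitions of $2$ are $(1,1)$ (yielding $2g-2 = 0$ and therefore $g = 1$) and $(2)$ (yielding $2g-2 = 1$, which admits no integer genus and so is ruled out by the parity observation). Hence every $2$-tile STS is a torus, and Proposition~\ref{4.4} immediately forbids an oblivious point.

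For $n = 4$, I would enumerate the partitions of $4$ with their associated values of $2g-2$: $(1,1,1,1)\mapsto 0$, $(2,1,1)\mapsto 1$, $(2,2)\mapsto 2$, $(3,1)\mapsto 2$, and $(4)\mapsto 3$. The parity constraint eliminates $(2,1,1)$ and $(4)$, and the three surviving partitions produce genus $1$ or $2$. Proposition~\ref{4.4} again rules out any oblivious point.

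The argument is essentially a finite enumeration of branching data, and I do not anticipate any substantive obstacle. The only subtlety is recognizing that the integrality of the genus forces a parity condition on the ramification partition that, together with the small number of tiles, confines the genus of the resulting STS to the range already excluded by Proposition~\ref{4.4}.
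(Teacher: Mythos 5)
Your argument is correct and takes essentially the same route as the paper: bound the possible strata for $2$- and $4$-tile surfaces and then invoke Proposition~\ref{4.4} to exclude genera $1$ and $2$. The only difference is cosmetic — you re-derive the tile-count bound from the branched-cover/Gauss--Bonnet description, whereas the paper cites Matheus's Proposition~19, which packages exactly the same inequality $\sum(k_i+1)\le n$.
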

\begin{proof}
Proposition 19 of Matheus \cite{matheus} states that a square tiled surface in $\H(k_1, \dots , k_n)$ is tiled by at least $N = \displaystyle \sum_{i=1}^n(k_n + 1)$ squares.

Hence, every two tiled surface is in $\H(0)$, i.e. has genus 1. Every four tiled surface is in $\H(0), \H(1, 1)$ or $\H(2)$ i.e. has genus 1 or 2. By Proposition $\ref{4.4}$ there is no oblivious point on any surface of genus 1 or 2, hence no two or four tiled STS has an oblivious point. 
\end{proof}

\begin{remark}
    For the same reason as Lemma \ref{6.2} every six tiled surface is in  one of the following strata: $\H(0), \H(1, 1), \H(2), \H(2, 2), \H(1, 3), \H(4)$, all of which have genus $\le 3$. However, by Proposition \ref{4.4} no surface of genus 1 or 2 has an oblivious point on it, hence if such a surface exists, it must be in one of the following strata: $\H(4), \H(1, 3)$ or $\H(2, 2)$. No surface with an oblivious point in any of these strata (and consequently with six tiles) can be obtained through Proposition \ref{NPS3.1}, however, the existence of such a surface in general is unknown.
\end{remark}

\begin{corollary}
For every odd integer $m=kn^2, k\geq3, n\geq3,$ there exists a square tiled surface with $m$ tiles and exactly $k$ oblivious points on it.
\end{corollary}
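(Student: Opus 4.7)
The plan is to construct, for each admissible $m = kn^2$ with $k, n$ odd and $\geq 3$, a cyclic $k$-fold branched translation cover of the $n \times n$ square torus $T_n = \R^2/(n\Z)^2$ (naturally an STS with $n^2$ unit square tiles) branched over the set $P = (\Z/n\Z)^2 \setminus \{(0,0)\}$ of all non-origin lattice points.

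First I would verify two elementary facts about $T_n$ marked at $P$: \emph{(i)} every closed geodesic through the origin meets $P$, because such a geodesic has direction $(q, p)$ with $\gcd(p, q) = 1$ and at unit time passes through $(q \bmod n,\, p \bmod n)$, which is nonzero since $n \nmid \gcd(p, q) = 1$; and \emph{(ii)} every non-lattice point $x \in T_n$ admits a closed geodesic avoiding $P$, namely the horizontal or vertical line through $x$ (choosing whichever coordinate of $x$ is non-integer), which visits no lattice points at all.

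Next I would build $\rho: Y \to T_n$ as a cyclic $k$-fold cover branched over $P$ with every branch value fully ramified. Such a cover corresponds to a monodromy homomorphism $\pi_1(T_n \setminus P) \to \Z/k$ assigning to the small loop around each $p \in P$ a unit $e_p \in (\Z/k)^{\times}$, subject to the relation $\sum_{p \in P} e_p \equiv 0 \pmod{k}$ coming from $[a, b] \prod_p \gamma_p = 1$ in $\pi_1$. Since $n$ odd forces $|P| = n^2 - 1$ even, I can assign $e_p = +1$ to half of $P$ and $e_p = -1$ to the other half, making the sum zero; since $\{\pm 1\}$ generates $\Z/k$ for $k$ odd, $Y$ is connected.

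Finally I would count tiles and oblivious points. Each tile of $T_n$ has $k$ disjoint lifts to $Y$, so $Y$ is an STS with $kn^2$ unit square tiles. By Lemma~\ref{Cover Converse} and Lemma~\ref{obl}, a regular $y \in Y$ is oblivious iff $\rho(y)$ has no closed geodesic in $T_n$ avoiding $P$; by \emph{(i)} and \emph{(ii)}, this happens only for $\rho(y) = (0, 0)$, as every other lattice point lies in $P$ and lifts to a cone point (not a regular point), and every non-lattice point has an unblocked geodesic. Since the origin is regular and not in $P$, it has $k$ regular preimages, all oblivious, giving exactly $k$ oblivious points.

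The main obstacle is arranging the branching so that the origin is the unique blocked regular point: smaller lattice blocking sets for the origin on $T_n$ typically also block extra lattice points (one can check by hand on $T_3$ that every minimal blocking set does so), and each such extra blocked lattice point would contribute $k$ unwanted additional oblivious preimages in the cover. Taking $P$ to be all of $(\Z/n\Z)^2 \setminus \{(0,0)\}$ bypasses this entirely, because every potential extra oblivious point either lies in $P$ (hence becomes a cone point in $Y$) or is a non-lattice point (hence is unblocked by \emph{(ii)}).
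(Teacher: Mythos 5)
Your proposal is correct and follows the same core strategy as the paper: take a degree-$k$ branched translation cover of the $n\times n$ square torus branched over the $n^2-1$ non-origin lattice points, use the fact that those points block the origin, and conclude via the covering lemmas of Section~\ref{sec:trans} that exactly the $k$ preimages of the origin are oblivious. The main cosmetic difference is how you realize the cover: the paper generalizes the explicit cyclic slit construction from Section~\ref{sec:slits} (pairing the $n^2-1$ branch points into $(n^2-1)/2$ slits and gluing $k$ copies in a cycle), whereas you package the same data as a monodromy homomorphism $\pi_1(T_n\setminus P)\to\Z/k$ with $e_p=\pm1$; the slit construction is precisely one valid choice of your $\pm 1$ assignment, so these produce isomorphic covers. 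Your presentation is cleaner in two respects: you prove directly that $(\Z/n\Z)^2\setminus\{0\}$ blocks the origin rather than citing Leli\`evre--Monteil--Weiss, and you spell out why \emph{no other} regular point of $Y$ is oblivious (non-lattice points have axis-parallel geodesics avoiding $P$, and all non-origin lattice points lift to cone points by full ramification), whereas the paper leaves the ``exactly $k$'' count somewhat implicit. Your closing remark about why a minimal blocking set would not work—extra blocked lattice points would produce unwanted oblivious preimages—is a useful observation that the paper does not make explicit.
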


\begin{proof}
To construct such a surface, one generalises the slit construction from Section 5. On the unit square torus one marks all the points at coordinates $\displaystyle \qty(\frac{p}{n}, \frac qn), p, q\in \qty{1, 2, \dots, n}$. By Leli\`evre, Monteil, and Weiss [5, Section 6], tbis set constitutes a blocking set for the origin. 

Hence, by taking a $k$--cover of this surface and slitting the $k$ copies together as was done in Section 5, one obtains a a translation surface with exactly $k$ oblivious points, one at the origin of each sheet. An example of the construction with $k\times 3^2$ tiles is shown in Figure \ref{kn^2}.

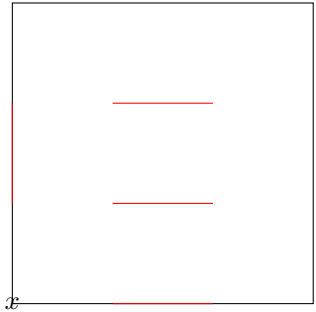
\begin{figure}[h!]
    \begin{center}
    \begin{tikzpicture}
    \draw (0, 0) rectangle (4,4);
    \draw[color=red] (4/3, 0) -- (8/3, 0);
    \draw[color=red] (4/3, 4/3) -- (8/3, 4/3);
    \draw[color=red] (4/3, 8/3) -- (8/3, 8/3);
    \draw[color=red] (0, 4/3) -- (0, 8/3);
    \draw (0, 0) node {$x$};
\end{tikzpicture}
\end{center}

    \caption{Slitting $k$ such sheets and identifying along a $k$--cycle gives a $k\times 3^2$ tiled surface and $k$ oblivious points, one at each $x$}
    \label{kn^2}
\end{figure}

This construction gives one the requisite surface with $kn^2$ tiles and $k$ oblivious points. This surface is in $\H((k-1)^{4(2n+1)})$ and has genus $2(2n+1)(k-1) + 1$
\end{proof}

Let us conclude with a final question: for large enough $n$, can one always construct a STS from $n$ squares which has an oblivious point?

\begin{question}
Is there an $N$ such that for all $n \ge N$, there exists a square-tiled surface with exactly $n$ tiles and an oblivious point?
\end{question}

\section*{Acknowledgements}
We would like to thank Aaron Calderon for his invaluable advice and countless helpful discussions related to this research, as well as his constructive feedback and support in editing this paper. We also extend our gratitude to Diana Davis for her conversations and suggestions on future research. We thank the Summer Undergraduate Math Research at Yale (SUMRY) program and the Young Mathematicians Conference (YMC) for their financial support of this work.  We also thank the referee for their considered suggestions.

\bibliographystyle{splncs04}
\bibliography{biblio}

\end{document}